\newtheorem{theorem}{Theorem}
\newtheorem*{theorem*}{Theorem}
\newtheorem{corollary}[theorem]{Corollary}
\newtheorem{lemma}[theorem]{Lemma}
\newtheorem{rem}[theorem]{Remark}
\newtheorem{notation}[theorem]{Notation}
\newtheorem{proposition}[theorem]{Proposition}
\newtheorem{problem}{Problem} 
\theoremstyle{definition}
\newtheorem{definition}[theorem]{Definition}
\newenvironment{remark}[1][Remark]{\begin{trivlist}
\item[\hskip \labelsep {\bfseries #1}]}{\end{trivlist}}
\newcommand{\fin}{[\mathbb{N}]^{<\omega}}
\newcommand{\infin}{[\mathbb{N}]}
\newcommand{\mcg}{\mathcal{G}}
\newcommand{\mcf}{\mathcal{F}}
\newcommand{\mcs}{\mathcal{S}}
\newcommand{\nn}{\mathbb{N}}
\newcommand{\ee}{\varepsilon}
\newcommand{\inn}{\in \mathbb{N}}
\newcommand{\supp}{\mathrm{supp}}
\newcommand{\sspan}{\mathrm{span}}
\DeclareMathOperator{\sgn}{sgn}
\newcommand{\e}{\varepsilon}
\newcommand{\al}{\alpha}
\newcommand{\schr}{Schreier functional}
\newcommand{\la}{\lambda}
\newcommand{\X}{\mathfrak{X}_{_{^{0,1}}}^n}
\newcommand{\Xxi}{\mathfrak{X}_{_{^{0,1}}}^\xi}
\newcommand{\Xox}{\mathfrak{X}_{_{^{0,1}}}^{\omega^\xi}}
\DeclareMathOperator{\ran}{ran}
\begin{document}
 
\title [Arbitrarily distortable Banach spaces of higher order]{Arbitrarily distortable Banach spaces of higher order}

\author{Kevin Beanland}
\address{Department of Mathematics, Washington and Lee University, Lexington, VA 24450.}
\email{beanlandk@wlu.edu}

\author{Ryan Causey}
\address{Department of Mathematics and Computer Science\\
Texas A\&M University, College Station, TX 77845  USA}
\email{rcausey@math.tamu.edu}

\author[Pavlos Motakis]{Pavlos Motakis}
\address{National Technical University of Athens, Faculty of Applied Sciences,
Department of Mathematics, Zografou Campus, 157 80, Athens,
Greece} \email{pmotakis@central.ntua.gr}

\thanks{2010 \textit{Mathematics Subject Classification}. Primary: 46B03, 46B06}
\thanks{\textit{Key words:} Spreading models, distortion}
\thanks{The first named author acknowledges support from the Lenfest Summer Grant at Washington and Lee University.}
\thanks{The first and third authors would like to acknowledge the support of program API$\Sigma$TEIA-1082.}

\begin{abstract}
We study an ordinal rank on the class of Banach spaces with bases that quantifies the distortion of the norm of a given Banach space. The rank $AD(\cdot)$, introduced by P. Dodos, uses the transfinite Schreier familes and has the property that $AD(X) <\omega_1$ if and only if $X$ is arbitrarily distortable. We prove several properties of this rank as well as some new results concerning higher order $\ell_1$ spreading models. We also compute this rank for for several Banach spaces. In particular, it is shown that class of Banach spaces $(\Xox)_{\xi  <\omega_1}$, which each admit $\ell_1$ and $c_0$ spreading models hereditarily, and were introduced by S.A. Argyros, the first and third author, satisfy $AD(\Xox)=\omega^\xi+1$. This answers some questions of Dodos.  
\end{abstract}
\maketitle

\section{Introduction}

Let $(X,\|\cdot\|)$ be a Banach space with a Schauder basis $(e_i)_{i \inn}$ and $t>1$. We say that $X$ is $t$-distortable if there is an equivalent norm $|\cdot |$ on $X$ so that for each normalized block sequence $(x_n)$ of $(e_i)$ there is a finite set $F \subset \nn$ and vectors $x,y \in \sspan \{x_n: n \in F\}$ so that 
$$\|x\|=\|y\|=1 \mbox{ and } \frac{|x|}{|y|}>t.$$
A space is arbitrarily distortable if it is $t$-distortable for each $t>1$. 

In the 1960s, R.C. James \cite{JamesNonSquare}
proved that $\ell_1$ and $c_0$ are not $t$-distortable for any $t>1$. In 1994, E. Odell and Th. Schlumprecht \cite{OSdistortion} famously proved that the spaces $\ell_p$, for $1<p< \infty$ are arbitrarily distortable. Whether there is a space that is distortable for some $t>1$ but not arbitrarily distortable is a central open problem in Banach space theory \cite{Gowersblog}. Other important results on distortion can be found in the references \cite{AD,Maureydistortion,MilmanTJdistortion,TJ-GAFA}

In the current paper we study distortion in Banach spaces from a different point of view. Instead of asking whether a given space is $t$-distortable, we consider the problem of quantifying, by using the transfinite Schreier families, the complexity of the distortion. In particular we would like to know how `difficult' it is to find the finite set $F$ that witnesses the distortion in the above definition. Following P. Dodos \cite{DodosMO}, if we consider a collection $\mathcal{G}$ of finite subsets of $\nn$ we say that a space $X$ with a basis is $t$-$\mathcal{G}$ distortable if the set $F$, in the definition of $t$-distortable, can be choosen as an element of $\mathcal{G}$. If for any $\mathcal{G}$ a space is $t$-$\mathcal{G}$ distortable it must be $t$-distortable. A space is $\mathcal{G}$ arbitrarily distortable if it is $t$-$\mathcal{G}$ distortable for all $t>1$. We study the cases where $\mathcal{G}$ is a Schreier family $\mathcal{S}_\xi$ for some countable ordinal $\xi$. This naturally gives rise to a ordinal rank on a space; namely, the minimum $\xi$ so that $X$ is $t$-$\mathcal{S}_\xi$ distortable. 

In this paper we record the definition of this ordinal rank and some facts concerning it (see Proposition \ref{propindex}). In particular we prove that a space $X$ with a basis is arbitrarily distortable if and only if there is a countable ordinal $\xi$  so that $X$ is $\mathcal{S}_\xi$ arbitrarily distortable. We also answer some natural questions raised by P. Dodos \cite{DodosMO}. In particular, we prove the following:

\begin{theorem*}
For each countable ordinal $\xi$ there is a reflexive space $\Xox$ so that for every block subspace $X$ of $\Xox$ we have $AD(X) = \omega^\xi +1$. Moreover, every subspace of this space contains a $c_0^1$ and an $\ell_1^{\omega^\xi}$ spreading model.  
\end{theorem*}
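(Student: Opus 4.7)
The plan is to verify the three components separately: the upper bound $AD(X) \leq \omega^\xi + 1$, the matching lower bound $AD(X) \geq \omega^\xi + 1$, and the hereditary presence of $c_0^1$ and $\ell_1^{\omega^\xi}$ spreading models. The spreading model statement is already contained in the construction of $\Xox$ from the earlier work of Argyros, Beanland and Motakis, so I would cite that directly and focus the argument on the rank computation.

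For the upper bound I would construct an equivalent norm witnessing $t$-$\Sxi[\omega^\xi+1]$ distortion for every $t>1$ and every block subspace $X$. Since $\Xox$ is built from a mixed Tsirelson-type norming set involving both $\ell_1^{\omega^\xi}$-type and $c_0^1$-type functionals, a natural candidate is the biorthogonal-type seminorm obtained by restricting the norming set to its $\ell_1^{\omega^\xi}$ component, so that the identity from $(X,|\cdot|)$ to $(X,\|\cdot\|)$ blows up on long averages. Given a normalized block sequence $(x_n)$ in $X$, I would pass to a subsequence on which the $\ell_1^{\omega^\xi}$ spreading model constant is close to $1$, so that any $\Sxi[\omega^\xi]$-admissible convex combination $y$ has $\|y\|$ close to $1$ in the new norm. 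Using the $c_0^1$ structure inherited from $\Xox$, averaging many such $y$'s over one extra $\Sxi[1]$ level (i.e.\ a total of one $\Sxi[\omega^\xi+1] = \Sxi[1][\Sxi[\omega^\xi]]$ set) produces a vector whose original norm is arbitrarily small. The pair $(y, z)$ then witnesses the required distortion with support in $\Sxi[\omega^\xi+1]$.

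For the lower bound I would fix any equivalent norm $|\cdot|$ on $X$ and any $\eta \leq \omega^\xi$, and show that $X$ cannot be $t$-$\Sxi[\eta]$ distortable for $t$ large enough. The key tool is the new results about higher order $\ell_1$ spreading models advertised in the introduction: by a transfinite Ramsey and stabilization argument applied to $|\cdot|$, there exists a block subspace on which $|\cdot|$ is uniformly equivalent to an $\ell_1^{\omega^\xi}$-type combination norm on all $\Sxi[\eta]$-admissible combinations of a block sequence. Since $\ell_1$ itself is not distortable by James's theorem, on this subspace any two normalized vectors supported on an $\Sxi[\eta]$-admissible set have $|\cdot|$-norm ratio bounded by a uniform constant independent of $t$. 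Picking $t$ exceeding this constant defeats $t$-$\Sxi[\eta]$-distortability and yields $AD(X) > \eta$, hence $AD(X) \geq \omega^\xi + 1$.

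The main obstacle is the stabilization step underpinning the lower bound: transferring the hereditary $\ell_1^{\omega^\xi}$ spreading model property of $\Xox$, which is formulated with respect to the original norm, into a uniform $\ell_1^{\omega^\xi}$-behavior under any equivalent norm on $\Sxi[\eta]$-admissible averages. I would handle this by first invoking Proposition~\ref{propindex} to reduce $t$-$\Sxi[\eta]$ distortability to a statement about such admissible averages, then running a diagonal argument that simultaneously stabilizes the values of $|\cdot|$ on countably many averaging schemes indexed by $\Sxi[\eta]$. The rest of the proof should then consist of routine bookkeeping with the Schreier-family arithmetic $\Sxi[\omega^\xi+1] = \Sxi[1][\Sxi[\omega^\xi]]$ and standard spreading-model manipulations.
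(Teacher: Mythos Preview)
Your overall outline (lower bound from $\ell_1^{\omega^\xi}$ structure, upper bound from an explicit distorting norm, spreading models from \cite{ABM-Illinois}) matches the paper, but both halves of the rank computation have gaps relative to what the paper actually does.

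\textbf{Lower bound.} Your quantifiers are in the wrong order. You fix an equivalent norm $|\cdot|$, find a block sequence on which $|\cdot|$ behaves like $\ell_1^{\omega^\xi}$ with some constant $C=C(|\cdot|)$, and then ``pick $t$ exceeding this constant.'' That only shows: for every $|\cdot|$ there exists $t$ such that $|\cdot|$ is not a $t$-$\mcs_{\omega^\xi}$ distortion. What you need for $AD(X)>\omega^\xi$ is the reverse: a single $t>1$ that defeats every $|\cdot|$. The paper closes this gap using the very theorem you cite from the introduction, but in a sharper way than you describe. Theorem~\ref{goodconstants} says that from \emph{any} $\ell_1^{\omega^\xi}$ spreading model one can block down to a $(1+\delta)$-$\ell_1^{\omega^\xi}$ spreading model for \emph{any} prescribed $\delta>0$, and moreover the blocking has the property that $\mcs_{\omega^\xi}$-supports of the new sequence, when unioned over $\mcs_{\omega^\xi}$-sets, remain in $\mcs_{\omega^\xi}$. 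This last point is the real engine and is absent from your sketch: it means that improving the constant in $|\cdot|$ cannot worsen the constant in $\|\cdot\|$, so one obtains a sequence which is simultaneously $(1+\delta)$-$\ell_1^{\omega^\xi}$ in both norms. Then for $x,y$ normalized in $\|\cdot\|$ with $\mcs_{\omega^\xi}$-support one gets $|x|/|y|\leqslant(1+\delta)^2$, and since $\delta$ is chosen before $|\cdot|$, this gives $D_{1+\varepsilon}(X)>\omega^\xi$ for every $\varepsilon>0$. No transfinite Ramsey or diagonal stabilization is needed; it is a direct James-type blocking.

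\textbf{Upper bound.} The paper does not restrict the norming set; it uses the concrete norms $|x|_n=\sup\bigl\{\sum_{i=1}^n\|I_ix\|:I_1<\cdots<I_n\text{ intervals}\bigr\}$. Given any normalized block sequence, it locates inside a single $\mcs_{\omega^\xi+1}$-set both a block $(y_i)_{i=1}^k$ which is $(1+\varepsilon)$-equivalent to the $\ell_1^k$ basis and a block $(z_i)_{i=1}^n$ which is $(1+\varepsilon)$-equivalent to the $\ell_\infty^n$ basis, using Proposition~\ref{properties of the space} and Theorem~\ref{goodconstants}. The normalized sums $y,z$ then satisfy $|z|_n/|y|_n\geqslant \dfrac{n}{(1+\varepsilon)^2}\cdot\dfrac{k}{k+2n}$, which can be made arbitrarily close to $n$. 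Your proposed seminorm and your description of which vector has small norm in which topology are too vague to verify; in particular the roles of the $\ell_1$ and $c_0$ blocks in your sketch appear reversed compared with what the argument actually needs.
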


The spaces $\Xox$ are introduced in a recent paper of S.A. Argyros and the first and third authors \cite{ABM-Illinois}. We prove several of the properties of these spaces in the final section of the paper.

As a step towards showing that $AD(\Xox)>\omega^\xi$ we prove the following result concerning $\ell_1^{\omega^\xi}$ that we believe is of independent interest.

\begin{theorem*} Let $X$ be a Banach space, $\xi< \omega_1$.  If $X$ contains an $\ell_1^{\omega^\xi}$ spreading model, then for any $\varepsilon>0$, $X$ contains a $(1+\varepsilon)$-$\ell_1^{\omega^\xi}$ spreading model. Moreover the same result holds replacing $\ell_1^{\omega^\xi}$ with $c_0^{\omega^\xi}$  
\end{theorem*}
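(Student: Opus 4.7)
The plan is to proceed by transfinite induction on $\xi$. I treat the $\ell_1^{\omega^\xi}$ case first; the $c_0^{\omega^\xi}$ assertion follows from a parallel construction in which lower $\ell_1$-estimates and convex averages are replaced by upper $c_0$-estimates and alternating-sign sums. The base case $\xi=0$ is the known constant-stabilization theorem for Schreier ($\mathcal{S}_1$) spreading models, and can be proved directly: given a $C$-$\ell_1^1$ spreading model $(x_n)$, set $c=\inf\{D : \text{some subsequence of } (x_n) \text{ is a } D\text{-}\ell_1^1 \text{ spreading model}\}$; if $c>1$, then for each candidate subsequence one selects an $\mathcal{S}_1$-admissible convex ``bad'' block of norm close to $1/c$, and averaging many such bad blocks from distinct subsequences produces a further $\mathcal{S}_1$-admissible convex combination of norm strictly less than $1/c$, contradicting the definition of $c$.

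For the inductive step, fix $0<\xi<\omega_1$ and assume the conclusion at all $\eta<\xi$. Let $(x_n)$ be a normalized $C$-$\ell_1^{\omega^\xi}$ spreading model and fix $\varepsilon>0$. Since on sets with sufficiently large $\min$ one has $\mathcal{S}_{\omega^\eta}\subseteq\mathcal{S}_{\omega^\xi}$ for $\eta<\xi$, the sequence $(x_n)$ is eventually a $C$-$\ell_1^{\omega^\eta}$ spreading model at each lower level, so the inductive hypothesis refines $(x_n)$ to a subsequence $(x_n')$ exhibiting $(1+\delta)$-$\ell_1^{\omega^\eta}$ behaviour at the relevant lower levels, where $\delta$ is chosen small in terms of $\varepsilon$. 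Then at level $\omega^\xi$ I mimic the base-case stabilization: set $c_\xi=\inf\{D : \text{some subsequence of } (x_n') \text{ is a } D\text{-}\ell_1^{\omega^\xi} \text{ spreading model}\}$ and suppose for contradiction $c_\xi>1$. Using the Schreier composition structure---$\mathcal{S}_{\omega^{\eta+1}}=\mathcal{S}_1[\mathcal{S}_{\omega^\eta}]$ at a successor, or a defining sequence $\xi_n\nearrow\xi$ at a limit---I average many $\mathcal{S}_{\omega^\xi}$-admissible bad blocks from distinct subsequences; the improved lower-level behaviour $(1+\delta)$-$\ell_1^{\omega^\eta}$ is what ensures the combined average remains $\mathcal{S}_{\omega^\xi}$-admissible and has norm strictly less than $1/c_\xi$, contradicting the definition of $c_\xi$ and forcing $c_\xi=1$. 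At a limit $\xi$, an additional diagonalization over tolerances $\varepsilon/2^n$ applied at each $\xi_n$ yields the uniform conclusion.

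The main obstacle is the stabilization at level $\omega^\xi$ in the inductive step. Concretely, one must verify that averaging $\mathcal{S}_{\omega^\xi}$-admissible bad blocks across different candidate subsequences produces a single $\mathcal{S}_{\omega^\xi}$-admissible convex combination of norm below $1/c_\xi$. Three bookkeeping points demand care: (i)~the Schreier composition must be tracked precisely so that the combined support is $\mathcal{S}_{\omega^\xi}$-admissible; (ii)~the preliminary $(1+\delta)$-$\ell_1^{\omega^\eta}$ refinement at lower levels must be strong enough to convert lower-level bounds into tight upper bounds on the inner norms of each bad block, so that the outer convex combination behaves like a genuine $\mathcal{S}_1$-combination of nearly unit vectors; (iii)~the quantitative parameters ($\delta$, the length of the bad blocks, and the number of subsequences averaged) must be chosen compatibly with $\varepsilon$ and $C$. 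Once these ingredients are in place the contradiction closes as in the base case, and the $c_0^{\omega^\xi}$ version is obtained analogously, with suitable alternating-sign blocks replacing the convex averages.
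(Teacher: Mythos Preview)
Your plan has a genuine gap at the point you yourself flag as ``bookkeeping point (i)'': controlling Schreier admissibility of the combined bad blocks. This is not bookkeeping; it is the whole argument, and the mechanism you propose for it does not work.

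Concretely: in your base case you select, for each candidate subsequence, an $\mathcal{S}_1$-admissible convex block of norm close to $1/c$, and then average many of them. But a union of $k$ successive $\mathcal{S}_1$ sets lies in $\mathcal{A}_k[\mathcal{S}_1]\subset\mathcal{S}_2$, not in $\mathcal{S}_1$; the lower $\ell_1^1$ estimate therefore gives you nothing on this average, and no contradiction arises. The same failure recurs verbatim at level $\omega^\xi$: averaging $\mathcal{S}_{\omega^\xi}$-admissible bad blocks lands you in $\mathcal{S}_{\omega^\xi+1}$, outside the range where your hypothesis applies. Your proposed fix---using the inductive hypothesis to get $(1+\delta)$-$\ell_1^{\omega^\eta}$ behaviour at lower levels---cannot rescue this, because improved \emph{norm} constants at lower levels say nothing about the \emph{combinatorial size} of supports. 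Good $\ell_1$ estimates do not make sets smaller.

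What is missing is a dichotomy that forces the bad blocks to have supports not merely in $\mathcal{S}_{\omega^\xi}$ but in $\mathcal{S}_\zeta$ for some fixed $\zeta<\omega^\xi$ (or $\mathcal{A}_n$ when $\xi=0$). The paper obtains this as follows: fix a target constant $K^{1/2^m}$ and, for each $n$, ask whether some subsequence already satisfies the $K^{1/2^m}$-lower estimate on all $\mathcal{S}_{\xi_n}$ sets (``property $P_n$''). Either a diagonal subsequence has $P_n$ for every $n$, and you are done at this stage; or for some fixed $n$ no subsequence has $P_n$, and then the bad blocks can all be taken with supports in $\mathcal{S}_{\xi_n}$. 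Now the absorption $\xi_n+\omega^\xi=\omega^\xi$ (equivalently $\mathcal{S}_{\omega^\xi}[\mathcal{S}_{\xi_n}]\subset\mathcal{S}_{\omega^\xi}$ after passing to a subsequence) guarantees that the blocked sequence is still an $\ell_1^{\omega^\xi}$ spreading model, with constant improved from $K^{1/2^{m-1}}$ to $K^{1/2^m}$. Iterating finitely many times gives $(1+\varepsilon)$. No transfinite induction on $\xi$ is needed; the argument is a direct James-type iteration for each fixed $\xi$, and the additive indecomposability of $\omega^\xi$ is exactly what makes the composition step close up.
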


The above theorem is analogous to a result concerning block indices proved by Judd and Odell \cite{JuddOdell} and extends Remark 6.6 (iii) found in this paper. We also compute certain distortion indices for several other Banach space including Tsirelson space and Schlumprecht space \cite{Sc}. Our computations rely heavily on the presence of $\ell_1$ and $c_0$ structure in our spaces and uses James' well-known blocking arguments. Consequently, our methods do not allow us to compute lower bounds for spaces lacking this type of structure.

Finally, we note that W.T. Gowers asked if $\ell_2$ is $t$-$\mathcal{S}_1$ distortable for any $t>1$ \cite{DodosMO}. As he noted in the given reference, this problem can be interpreted as a distortion variant of the strengthened Finite Ramsey Theorem. All proofs of the strengthened finite Ramsey Theorem use the infinite Ramsey Theorem and, indeed, the strengthened Finite Ramsey Theorem is unprovable in Peano Arithmetic \cite{PH}. On the other hand, Gowers showed in \cite{Go1} that the infinite Ramsey Theorem is false in the Banach space setting and, consequently, this problem is likely to be very difficult or perhaps, in an extreme case, undecidable.

This paper is organized as follows. In section 2 we set our notation, give basic definitions and facts concerning Schreier families. Section 3 contains the precise definition of the distortion index and some general facts concerning this index. In section 4 the second theorem listed above is proved. The technique for this proof is then used to prove corresponding results concerning the distortion indices for spaces admitting $\ell_1^{\omega^\xi}$ or $c_0^{\omega^\xi}$ spreading models. It is also shown that admitting no $\ell_1^{\omega^\xi}$ 
spreading model is a three space property.  In section 5 we use the results from section 4 to compute some distortion indices for certain spaces. The final section contains some previously unpublished facts about the spaces $\mathfrak{X}_{_{0,1}}^{\omega^\xi}$ which first appeared in \cite{ABM-Illinois} and which are needed to prove that $AD(\mathfrak{X}_{_{0,1}}^{\omega^\xi})=\omega^\xi+1$.

\section{Notation, Schreier families, basic facts}

\subsection{Notation and terminology}
We will often begin with a Banach space $X$ having norm $\|\cdot\|$ and consider an equivalent norm $|\cdot|$ on $X$.  If we write $S_X$ or refer to normalization of a vector without specifying a norm, it is with respect to the norm $\|\cdot\|$.  

Throughout, if $M$ is any subset of $\mathbb{N}$, we let $[M]^{<\omega}$ and $[M]$ denote the finite and infinite subsets of $M$, respectively.  We will identify subsets of the natural numbers in the obvious way with strictly increasing sequences of natural numbers.  We write $E<F$ if $\max E<\min F$, $n<F$ if $n<\min F$, and $n\leqslant F$ if $n\leqslant \min F$.  We follow the convention that $\min \varnothing = \omega$, $\max \varnothing = 0$.  If $(E_i)$ is a (finite or infinite) sequence in $[\mathbb{N}]^{<\omega}$ satisfying $E_i<E_{i+1}$ for all $i\in \mathbb{N}$, we call the sequence $(E_i)$ {\it successive}.  If $E\in \fin$ and $x\in \ell_\infty$, we let $Ex\in \ell_\infty$ be the sequence so that $Ex(i)=x(i)$ if $i\in E$ and $Ex(i)=0$ otherwise.    

If $(m_i)_{i\in I}, (n_i)_{i\in I}$ are (finite or infinite) strictly increasing subsequences in $\mathbb{N}$ with the same length so that $m_i\leqslant n_i$ for all $i\in I$, we say $(n_i)_{i\in I}$ is a {\it spread} of $(m_i)_{i\in I}$.  We say a subset $\mcf\subset  \fin$ is {\it spreading} if it contains all spreads of its members.  We say $\mcf$ is {\it hereditary} if it contains all subsets of its members.  We let $\mathfrak{S}$ denote the set of all non-empty, spreading, hereditary subsets of $\fin$.  

If $E\in \fin$ and $M=(m_i)\in [\mathbb{N}]$, we let $M(E)=(m_i: i\in E)$.  If $\mcf\subset \fin$, we let $\mcf(M) = \{M(E): E\in \mcf\}$.   If $\mcf, \mcg\subset\fin$, we let $$\mcf[\mcg]= \Bigl\{\underset{i=1}{\overset{n}{\bigcup}}E_i: E_1<\ldots<E_n, E_i\in \mcg \text{\ }\forall i, (\min E_i)_{i=1}^n\in \mcf\Bigr\}.$$  
It is easily checked that $(\mcf, \mcg)\mapsto \mcf[\mcg]$ defines an associative operation from $\mathfrak{S}^2$ into $\mathfrak{S}$.

If $(e_i)$ is a Schauder basic sequence with coordinate functionals $(e_i^*)$ and if  $x\in [e_n]:=\overline{\sspan\{e_n: n\inn\}}$, we let $\text{supp}_{(e_i)} x=(i:e_i^*(x)\neq 0)$.  When the basis is understood, we will write $\text{supp} x$ in place of $\text{supp}_{(e_i)}x$.  If $x,y\in [e_i]$ are such that $\text{supp}_{(e_i)} x< \text{supp}_{(e_i)} y$, we write $x<y$.  

\subsection{Schreier families}
We define for each $\xi< \omega_1$ the Schreier family  $\mcs_\xi\in \mathfrak{S}$ \cite{AA}.  The purpose of these families is to measure complexity, which will be made precise below.  We let $$\mcs_0 = \{\varnothing\}\cup \bigl\{(n): n\in \mathbb{N}\bigr\},$$ $$\mcs_1=\{E: |E|\leqslant \min E\},$$ $$\mcs_{\xi+1} = \mcs_1[\mcs_\xi],$$ and if $\mcs_\zeta$ has been defined for each $\zeta<\xi$, $\xi<\omega_1$ a limit ordinal, we choose $\xi_n\uparrow \xi$ and let $$\mcs_\xi = \{E: \exists n\leqslant E\in \mcs_{\xi_n}\}.$$  
One can easily show by induction that in the limit ordinal case, the sequence $\xi_n\uparrow \xi$ can be chosen so that for each $i$, $\mcs_{\xi_i}\subset \mcs_{\xi_{i+1}}$.  It will be convenient for us to proceed with this assumption. Note that $\mcs_1\subset \mcs_\xi$ for all $\xi\geqslant1$.    

For each natural number $n$, we let $$\mathcal{A}_n=\{E\in \fin: |E|\leqslant n\}.$$

We will use the following facts about the Schreier families, which are related to or contained in  \cite{OdellTJWagner}:

\begin{proposition}\begin{enumerate}
\item[(i)]  If $\xi\leqslant \zeta$, there exists $n\in \mathbb{N}$ so that if $n\leqslant E\in \mcs_\xi$, $E\in \mcs_\zeta$.  
\item[(ii)] For any $0\leqslant \xi, \zeta<\omega_1$ and $M\in [\nn]$, there exists $L\in [M]$ so that $$\mcs_\xi(L)[\mcs_\zeta]\subset \mcs_{\zeta+\xi}.$$ 
The above inclusion holds if we replace $L$ by any spread of $L$.

 \item[(iii)] If $\xi, \zeta< \omega_1$, there exists $L\in \infin$ so that $\mcs_\xi[\mcs_\zeta](L) \subset \mcs_{\zeta+\xi}$, and this inclusion holds if we replace $L$ by any spread of $L$. 

\item[(iv)] For $0\leqslant \xi, \zeta <\omega_1$ and a successive sequence $(F_i)$ of members of $\mcs_\zeta$, there exists $N\in [\nn]$ so that if $E\in \mcs_\xi(N)$, $\cup_{i\in E}F_i\in \mcs_{\zeta+\xi}$.

\end{enumerate}
\label{Schreier facts}
\end{proposition}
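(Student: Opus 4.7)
My plan is to prove all four parts by transfinite induction, exploiting the recursive definitions $\mcs_{\xi+1}=\mcs_1[\mcs_\xi]$ and $\mcs_\xi=\{E:\exists n\leqslant E\in \mcs_{\xi_n}\}$ in the limit case (with the nested assumption $\mcs_{\xi_k}\subset \mcs_{\xi_{k+1}}$ noted in the text), together with the associativity of $(\mcf,\mcg)\mapsto \mcf[\mcg]$. For (i), I would fix $\xi$ and induct on $\zeta\geqslant \xi$. The case $\zeta=\xi$ is immediate. At the successor, given $n$ that works for $\zeta$ and $n\leqslant E\in \mcs_\xi$, we have $E\in \mcs_\zeta$, and treating $E$ as a single block (with $|\{E\}|=1\leqslant \min E$) places it in $\mcs_1[\mcs_\zeta]=\mcs_{\zeta+1}$. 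At a limit $\zeta$ with $\zeta_k\uparrow \zeta$, pick $k$ with $\xi\leqslant \zeta_k$, apply the inductive hypothesis to obtain some $m$, and take $n=\max\{m,k\}$.

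For (ii), I would induct on $\xi$, holding $\zeta$ fixed and letting $M$ vary. The case $\xi=0$ is trivial since $\mcs_0[\mcs_\zeta]\subset \{\varnothing\}\cup \mcs_\zeta=\mcs_\zeta=\mcs_{\zeta+0}$. At the successor, the inductive hypothesis applied to $M$ gives $L_0\subset M$ with $\mcs_\xi(L_0)[\mcs_\zeta]\subset \mcs_{\zeta+\xi}$; by associativity $\mcs_{\xi+1}(L_0)[\mcs_\zeta]\subset \mcs_1[\mcs_{\zeta+\xi}]=\mcs_{\zeta+\xi+1}$, after passing to an initial tail to handle the ``$1\leqslant \min$'' condition. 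The limit case is the main technical step: given $\xi_k\uparrow \xi$, recursively pick $L_k\subset M$ with $\mcs_{\xi_k}(L_k)[\mcs_\zeta]\subset \mcs_{\zeta+\xi_k}$, then diagonalize to produce $L=(\ell_k)$ with $\ell_k\in L_k$ large enough that whenever $E\in \mcs_\xi$ satisfies the tail condition $n\leqslant E\in \mcs_{\xi_n}$, the image $L(E)$ inherits a suitable tail condition placing it in $\mcs_{\zeta+\xi}$, where (i) is used to identify the appropriate tail. Spreadingness of the Schreier families then yields the version with spreads of $L$.

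Part (iii) follows by the same inductive strategy but with $L$ on the outside; using spreadingness, any element of $\mcs_\xi[\mcs_\zeta](L)$ is a spread of an element of $\mcs_\xi(L')[\mcs_\zeta]$ for suitable $L'\subset L$, reducing it to (ii). Part (iv) is a direct application: given a successive sequence $(F_i)$ in $\mcs_\zeta$, the union $\cup_{i\in E}F_i$ for $E\in \mcs_\xi$ is by definition a member of $\mcs_\xi[\mcs_\zeta]$, so the $N$ delivered by (ii) produces the required conclusion $\cup_{i\in E}F_i\in \mcs_{\zeta+\xi}$. The main obstacle throughout is the limit case in (ii), where the diagonal choice of $L$ must be carefully coordinated with the defining sequence $\xi_k\uparrow \xi$; the nested inclusion $\mcs_{\xi_k}\subset \mcs_{\xi_{k+1}}$ together with part (i) is precisely what makes this diagonalization work uniformly.
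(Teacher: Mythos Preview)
Your proposal is correct and follows essentially the same route as the paper: fix $\zeta$ and induct on $\xi$ for (ii), handle the limit case by nested diagonalization together with (i), and derive (iii) and (iv) from (ii). Two minor remarks: in the successor step of (ii), what you call ``associativity'' actually requires the easy containment $(\mcs_1[\mcs_\xi])(L)\subset \mcs_1[\mcs_\xi(L)]$ (which holds since $\mcs_1$ is spreading); the paper simply unpacks this by hand. Also, the paper cites (i) from the literature, whereas you supply a correct direct proof.
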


Items (i) and (iii) are contained in \cite{OdellTJWagner}. To the best of our knowledge, item (ii) has not appeared in the literature. The proof of item (ii) is similar to the proof of item (iii), however, since it is new and a somewhat complicated, we include it for completeness.

\begin{proof}
(i) This item is contained in \cite{OdellTJWagner}. 

(ii) First, we note that if $L\in [M]$ has been chosen so that $$\mcs_\xi(L)[\mcs_\zeta]\subset \mcs_{\zeta+\xi},$$ we can replace $L$ with any $L'\in [L]$ and still have the desired containment.  

We fix $\zeta$ and prove the result by induction on $\xi$.  Since $$\mcs_0(L)[\mcs_\zeta]=\{E\in \mcs_\zeta: \min E\in L\}\subset \mcs_\zeta,$$ we can take $L=M$ in the base case.  

Suppose $M\in [\mathbb{N}]$ is given and $L\in [M]$ is such that $\mcs_\xi(L)[\mcs_\zeta]\subset \mcs_{\zeta+\xi}$. Then $\mcs_{\xi+1}(L)[\mcs_\zeta]\subset \mcs_{\zeta+\xi+1}$.  To see this, take $$E=\bigcup_{i=1}^n E_i\in \mcs_{\xi+1}(L)[\mcs_\zeta],$$ where $E_1<\ldots<E_n$, $E_i\in \mcs_\zeta$ for each $i$, and $A=(\min E_i)_{i=1}^n\in \mcs_{\xi+1}(L)$.  Let $B\in \mcs_{\xi+1}$ be such that $L(B)=A$.  Write $$B=\bigcup_{j=1}^k B_j,$$ where $B_1<\ldots<B_k$, $B_j\in \mcs_\xi$ for each $j$, and $k\leqslant B$.  Let $$I_j=(i\leqslant n: \min E_i\in L(B_j))$$ and $F_j=\cup_{i\in I_j}E_i$.    Note that $(\min F_i)_{i\in I_j}= L(B_j)\in \mcs_\xi(L)$, so $F_j\in \mcs_\xi(L)[\mcs_\zeta]\subset \mcs_{\zeta+\xi}$.  Moreover, $F_1<\ldots<F_k$ and $$\min F_1 =\min E \geqslant\min B\geqslant k,$$ so $$E=\bigcup_{j=1}^k F_j \in \mcs_{\zeta+\xi+1}.$$  

Last, suppose the result holds for each $\gamma<\xi$, $\xi<\omega_1$ a limit ordinal.  Note that $\zeta+\xi$ is also a limit ordinal.  Let $\xi_n\uparrow\xi$, $\gamma_n\uparrow \zeta+\xi$ be the ordinals used to define $\mcs_\xi$ and $\mcs_{\zeta+\xi}$, respectively.  Recall that we have selected these so that $\mcs_{\xi_n}\subset \mcs_{\xi_{n+1}}$ for all $n\in \mathbb{N}$.  First choose natural numbers $k_n$ so that $\zeta+\xi_n<\gamma_{k_n}$ for each $n\in \mathbb{N}$.  Next, choose natural numbers $r_n\geqslant k_n$ so that if $r_n\leqslant E\in \mcs_{\zeta+\xi_n}$, then $E\in \mcs_{\gamma_{k_n}}$.  Define $L_0=M$ and choose recursively $L_1, L_2, \ldots$ so that $L_n\in [L_{n-1}]$, $r_n\leqslant L_n$, and $\mcs_{\xi_n}(L_n)[\mcs_\zeta]\subset \mcs_{\zeta+\xi_n}$ for each $n\in \mathbb{N}$.  Let $L_n=(\ell^n_i)_i$ and let $\ell_n=\ell^n_n$.  Note that $\ell_1<\ell_2<\ldots$, and let $L=(\ell_n)$.  Fix $$E=\bigcup_{i=1}^k E_i\in \mcs_\xi(L)[\mcs_\zeta],$$ $E_1<\ldots<E_k$, $E_i\in \mcs_\zeta$, $A=(\min E_i)_{i=1}^k\in \mcs_\xi(L)$.  Choose $B\in \mcs_\xi$ so that $L(B)=A$ and set $n=\min B$.  Then $B\in \mcs_{\xi_n}$.  Let $L'=(\ell_1^n, \ldots, \ell^n_n, \ell_{n+1}, \ldots)$ and note that $L'\in [L_n]$.  Moreover, $A=L(B)=L'(B)$, so $$E\in \mcs_{\xi_n}(L')[\mcs_\zeta]\subset \mcs_{\xi_n}(L_n)[\mcs_\zeta]\subset\mcs_{\zeta+\xi_n}.$$  Furthermore, $$\min E\geqslant \min L_n \geqslant r_n \geqslant k_n,$$ so $$E\in \mcs_{\zeta+\xi_n}\cap [r_n, \infty)^{<\omega}\subset \mcs_{\gamma_{k_n}}\cap [k_n, \infty)^{<\omega}\subset \mcs_{\zeta+\xi}.$$  

(iii) This follows from (ii).  Let $M=\nn$ and choose $L\in [M]$ to satisfy the conclusion of (ii).  Then $$\mcs_\xi[\mcs_\zeta](L)\subset \mcs_\xi(L)[\mcs_\zeta]\subset \mcs_{\zeta+\xi}.$$  The containment still holds if we replace $L$ by any spread $L'$ of $L$, since in this case the elements of $\mcs_\xi[\mcs_\zeta](L')$ are spreads of elements of $\mcs_\xi[\mcs_\zeta](L)$ and $\mcs_{\zeta+\xi}$ is spreading.  

(iv) Let $m_i=\min F_i$ and $M=(m_i)$.  Choose $L=(m_{n_i})\in [M]$ so that $\mcs_\xi(L)[\mcs_\zeta]\subset \mcs_{\zeta+\xi}$.  We claim $N=(n_i)$ satisfies the conclusion.  If $E=N(F)\in \mcs_\xi(N)$, $$(\min F_i)_{i\in E} = (m_i)_{i\in E} = (m_{n_i})_{i\in F}=L(F)\in \mcs_\xi(L).$$  Then $$\bigcup_{i\in F} F_i\in \mcs_\xi(L)[\mcs_\zeta]\subset \mcs_{\zeta+\xi}.$$  \end{proof}

\section{Distortion indices}

Recall that if $t>1$, we say a Banach space $X$ with basis $(e_i)$ is $t$-{\it distortable}  if there exists an equivalent norm $|\cdot|$ on $X$ so that for any block sequence $(x_i)$ in $X$ there exists $F\in \fin$ and $x,y\in [x_i]_{i\in F}$ with $\|x\|=\|y\|=1$ and $|x|/|y|>t$. It is easy to see that if a Banach space with a basis is $t$-distortable with this definition then it is ($t-\delta$)-distortable for each $\delta>0$ using the usual definition of distortion. We say $X$ is {\it arbitrarily distortable} if it is $t$-distortable for all $t> 1$.

Let $\mcf\in \mathfrak{S}$, $t\geqslant 1$.  If $X$ is a Banach space with basis $(e_i)$, we will say an equivalent norm $|\cdot|$ on $X$ is a $t$-$\mcf$ {\it distortion} of $X$ if for all normalized blocks $(x_i)$ of $(e_i)$, there exists $E\in \mcf$ and $x,y\in [x_i]_{i\in E}$ with $\|x\|=\|y\|=1$ and $|x|/|y|>t$.  We say $X$ is $t$-$\mcf$ {\it distortable} if there exists a $t$-$\mcf$ distortion of $X$.  We say $X$ is $\mcf$ {\it arbitrarily distortable} if it is $t$-$\mcf$ distortable for every $t\geqslant 1$.  We let $$D_t(X) = \min \{\xi< \omega_1: X \text{\ is\ }t\text{-}\mcs_\xi \text{\ distortable}\}$$ if this set is non-empty, and $D_t(X)=\omega_1$ otherwise.  We let $$AD(X) = \min\{\xi< \omega_1: X\text{\ is \ }\mcs_\xi \text{\ arbitrarily distortable}\}$$ if this set is non-empty, and $AD(X) = \omega_1$ otherwise.  Formally speaking, these indices should refer to the basis, since it is not true {\it a priori} that this index is independent of the choice of basis, but we will abuse notation and assume the basis is understood.  

In the next proposition we record some relevant facts concerning this index. In particular, we observe that a space $X$ is arbitrarily distortable if and only if $AD(X)<\omega_1$. P. Dodos makes this observation in \cite{DodosMO}. 

\begin{proposition}Let $X$ be a Banach space with basis $(e_i)$ and let $t> 1$.  
\begin{enumerate}[(i)] 
\item The space $X$ is not $t$-distortable if and only if $D_t(X)=\omega_1$. 
\item If $D_t(X)=\xi<\omega_1$ for some $t$, then $X$ is $t$-$\zeta$ distortable for any $\xi<\zeta<\omega_1$.  
\item $AD(X)= \underset{n\in \mathbb{N}}{\sup} D_n(X)$. 
\item $X$ is arbitrarily distortable if and only if $AD(X)<\omega_1$.  \end{enumerate}\label{propindex}
\end{proposition}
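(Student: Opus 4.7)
The plan is to prove (ii) as the main technical step, observe (i) forward immediately, note (i) reverse as the main obstacle, and derive (iii) and (iv) formally. For (ii), suppose $D_t(X)=\xi<\omega_1$ with $|\cdot|$ witnessing $t$-$\mcs_\xi$ distortability and fix $\xi<\zeta<\omega_1$. By Proposition~\ref{Schreier facts}(i) choose $n\in \nn$ so that every $E\in \mcs_\xi$ with $n\leqslant E$ lies in $\mcs_\zeta$. Given a normalized block sequence $(x_i)$, apply the $t$-$\mcs_\xi$ distortion to the tail $(x_{n+i-1})_{i\geqslant 1}$, obtaining $F\in \mcs_\xi$ and normalized vectors $x,y \in \sspan\{x_{n+i-1}: i\in F\}$ with $|x|/|y|>t$. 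The translated index set $G=\{n+i-1:i\in F\}$ is a spread of $F$, so $G\in \mcs_\xi$, and $\min G\geqslant n$, whence $G\in \mcs_\zeta$; since the vectors $x,y$ lie in $\sspan\{x_k: k\in G\}$, $|\cdot|$ serves as a $t$-$\mcs_\zeta$ distortion.

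The forward direction of (i), that $D_t(X)<\omega_1$ entails $t$-distortability, is immediate from $\mcs_\xi\subset\fin$. The main obstacle is the reverse: given a $t$-distortion $|\cdot|$, produce a countable $\xi$ so that (after possibly modifying the norm) $X$ is $t$-$\mcs_\xi$ distortable. The approach is that for each block sequence $(x_i)$, one passes to a block subsequence along which, by the spreading property of the Schreier families, the witnessing finite index set is forced into some fixed $\mcs_\xi$; the delicate point is arranging $\xi$ uniformly across all block sequences, handled by a combinatorial extraction on the tree of block subsequences as in Dodos's paper \cite{DodosMO}.

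For (iii), monotonicity $D_t(X)\leqslant D_{t'}(X)$ for $t\leqslant t'$ is immediate since a $t'$-distortion is a fortiori a $t$-distortion; hence $\sup_{t\geqslant 1}D_t(X)=\sup_{n\inn}D_n(X)$. Using (ii) together with the definition of $D_t$, the space $X$ is $t$-$\mcs_\xi$ distortable iff $D_t(X)\leqslant \xi$, so $\mcs_\xi$ arbitrary distortability reduces to $D_t(X)\leqslant \xi$ for every $t\geqslant 1$; minimizing over $\xi$ yields $AD(X)=\sup_n D_n(X)$. Part (iv) then follows: by (i), arbitrary distortability is equivalent to $D_n(X)<\omega_1$ for every $n\in\nn$, and since a countable supremum of countable ordinals is countable, this is in turn equivalent via (iii) to $AD(X)<\omega_1$.
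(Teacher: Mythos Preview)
Your arguments for (ii), (iii), and (iv) are essentially identical to the paper's, and the forward direction of (i) is immediate as you note. The genuine gap is the reverse direction of (i): you need to show that if $D_t(X)=\omega_1$ then $X$ is not $t$-distortable, and your sketch does not do this.

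Your proposed approach---for each block sequence force the witnessing finite set into some $\mcs_\xi$, then ``handle uniformity by a combinatorial extraction on the tree of block subsequences as in \cite{DodosMO}''---is not an argument. First, \cite{DodosMO} is a MathOverflow question, not a paper containing such a technique. Second, and more importantly, the difficulty is precisely that there are uncountably many block sequences, so one cannot simply take a supremum of the individual $\xi$'s. The paper resolves this by the contrapositive: assume $D_t(X)=\omega_1$, fix an equivalent norm $|\cdot|$, and for each $\xi<\omega_1$ obtain a block sequence $(x^\xi_i)$ on which the distortion ratio never exceeds $t$ over $\mcs_\xi$-indexed spans. One then forms the tree
\[
T=\Bigl\{(x_1,\ldots,x_n)\subset S_X: x_1<\cdots<x_n,\ |x|\|y\|\leqslant t\,|y|\|x\|\ \text{for all}\ x,y\in\sspan\{x_i\}_{i=1}^n\Bigr\},
\]
observes that $(k_1,\ldots,k_n)\mapsto (x^\xi_{k_i})$ embeds $\mcs_\xi\setminus\{\varnothing\}$ into $T$ for every $\xi$, whence $o(T)=\omega_1$. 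Since $T$ is closed and $X$ is separable, Bourgain's version of the Kunen--Martin theorem \cite{BourgainBelgium} yields an infinite branch $(x_i)$; the subspace $[x_i]$ then shows $|\cdot|$ fails to $t$-distort $X$. This descriptive-set-theoretic step is the actual content of (i), and it is missing from your proposal.
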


\begin{proof}

$(i)$ Suppose $X$ is not $t$-distortable.  Let $|\cdot|$ be any equivalent norm on $X$.  Then by definition, there must exist a block sequence $(x_i)$ in $X$ so that for each $E\in \fin$ and each $x,y\in [x_i]_{i\in E}$ with $\|x\|=\|y\|=1$, $|x|/|y|\leqslant t$.  Then for any $\xi<\omega_1$, the sequence $(x_i)$ witnesses the fact that $X$ is not $t$-$\mcs_\xi$ distortable.  

In the reverse direction, suppose $D_t(X)=\omega_1$.  Let $|\cdot|$ be any equivalent norm on $X$.  For each $\xi< \omega_1$ there exists a normalized block $(x^\xi_i)_i$ so that for $E\in \mcs_\xi$ and $x,y\in [x^\xi_i]_{i\in E}$ with $\|x\|=\|y\|=1$, $|x|/|y|\leqslant t$.  Let $$T=\Bigl\{(x_i)_{i=1}^n\subset S_X: x_1<\ldots<x_n, |x|\|y\|\leqslant t|y|\|x\| \text{\ \ }\forall x,y\in [x_i]_{i=1}^n\Bigr\}.$$  One easily checks that $(k_1, \ldots, k_n)\mapsto (x^\xi_{k_i})_{i=1}^n$ is a tree isomorphism of $\mcs_\xi\setminus (\varnothing)$ with a subtree of $T$.  This means the order $o(T)=\omega_1$.  Since $X$ is separable and $T$ is clearly a closed tree, using Bourgain's version of the Kunen-Martin Theorem \cite{BourgainBelgium}, there must exist an infinite branch, say $(x_i)$.  Clearly $Y=[x_i]$ is so that if $x,y\in S_Y$, $|x|/|y|\leqslant t$.  

$(ii)$ Suppose $|\cdot|$ is a $t$-$\mathcal{S}_\xi$ distortion on $X$.  Let $(x_i)$ be a normalized block sequence in $X$.  By Proposition $1$ (i), there exists $n$ so that if $n\leqslant E\in \mcs_\xi$, $E\in \mcs_\zeta$.  We apply the definition of $t$-$\mathcal{S}_\xi$ distortion to the block sequence $(x_{i+n})_i$ to deduce the existence of $E\in \mcs_\xi$ and $x,y\in [x_{i+n}]_{i\in E}$ so that $\|x\|=\|y\|=1$ and $|x|/|y|>t$.  Then letting $F=(i+n:i\in E)\in \mcs_\zeta$, we deduce that $x,y$ witness the fact that $|\cdot|$ is also a $t$-$\mathcal{S}_\zeta$ distortion on $X$.

$(iii)$ Clearly $AD(X)\geqslant \sup D_n(X)$.  If $D_n(X)=\omega_1$ for some $n$, the result is clear.  So assume $D_n(X)<\omega_1$ for each $n$ and let $\xi=\sup D_n(X)\in\omega_1$.  Then by $(iii)$, $X$ is $n$-$\mathcal{S}_\xi$ distortable for each $n$, and $AD(X)\leqslant \xi$.  

$(iv)$ This is clear from $(i)$-$(iii)$.

\end{proof}

\section{Higher order spreading models and distortion}

As previously mentioned, it is a classical result of R.C. James that neither $c_0$ nor $\ell_1$ is $t$-distortable for any $t>1$ \cite{JamesNonSquare}.  In this section, we aim to show that certain types of $\ell_1$ or $c_0$ structure in a Banach space provide a similar non-distortability result with respect to the notion of $\mcs_\xi$ distortion.  

Let $\mcf\in \mathfrak{S}$.  If $X$ is a Banach space, $K\geqslant 1$, and $p\geqslant 1$, we say a basic sequence $(x_i)$ in $X$ is a $K$-$\ell_p^\mcf$ spreading model if there exist $c,C>0$ so that $cC\leqslant K$ and for any $E\in \mcf$ and any scalars $(a_i)_{i\in E}$, $$c^{-1}\Bigl(\sum_{i\in E} |a_i|^p\Bigr)^{1/p}\leqslant \Bigl\|\sum_{i\in E}a_i x_i\Bigr\|\leqslant C\Bigl(\sum_{i\in E}|a_i|^p\Bigr)^{1/p}.$$  We define $K$-$c_0^\mathcal{F}$ spreading models analogously.  If $\mcf=\mcs_\xi$, we write $\ell_p^\xi$ in place of $\ell_p^{\mcs_\xi}$.  

For $1\leqslant p \leqslant \infty$, we will say $(y_i)$ is a $p$-{\it absolutely convex blocking} of $(x_i)$ if there exists a successive sequence $(E_i)\subset [\nn]^{<\omega}$ and scalars $(a_j)$ so that $(a_j)_{j\in E_i}\in S_{\ell_p}$ and $y_i=\sum_{j\in E_i}a_jx_j$ for all $i\in \mathbb{N}$.  We will say $(y_i)$ is a $p$- $\mcf$-{\it absolutely convex blocking} of $(x_i)$ if it is a $p$- absolutely convex blocking of $(x_i)$ and the sets $(E_i)$ can be taken to lie in $\mcf$.     

We record, without proof, the following collection of remarks concerning $\ell_p^\mcf$ and $c_0^\mcf$ spreading models. 

\begin{rem} 
Let $\mcf, \mcg\in \mathfrak{S}$, $K\geqslant 1$.  
\begin{enumerate}[(i)]\item Any subsequence of a $K$-$\ell_p^\mcf$ or $K$-$c_0^\mcf$ spreading model is one as well.  
\item If $\mcf\subset \mcg$ and $(x_i)$ is a $K$-$\ell_p^\mcg$ spreading model, it is a $K$-$\ell_p^\mcf$ spreading model.  
\item If $(x_i)$ is a $K$-$\ell_p^{\mcf[\mcg]}$ spreading model and $(y_i)$ is a $p$- $\mcg$-absolutely convex blocking of $(x_i)$, then $(y_i)$ is a $K$-$\ell^\mcf_p$ spreading model.  
 
\end{enumerate} 
\label{list}
\end{rem}

The search for $\ell_p^\mcf$ or $c_0^\mcf$ spreading models in a Banach space $X$ is typically not impeded by the requirement from the definition that the sequence be basic. For $\ell_p^\mcf$, $p>1$, or $c_0^\mcf$ spreading models we easily observe the following. 

\begin{rem} 
Suppose $\mcf\in \mathfrak{S}$ contains sets of arbitrarily large cardinality and all singletons.  Suppose $(x_i)_i$, $c,C$ are such that $$c^{-1}\Bigl(\sum_{i\in E} |a_i|^p\Bigr)^{1/p}\leqslant \Bigl\|\sum_{i\in E} a_ix_i\Bigr\|\leqslant C\Bigl( \sum_{i\in E}|a_i|^p\Bigr)^{1/p},$$ for all $E\in \mcf$, scalars $(a_i)_{i\in E}$, and $p>1$.  Then we can find a successive sequence $(E_i)$ in $\mcf$ with $|E_i|= i$.  If $M\in \infin$, then $$\Bigl\|\sum_{j\in M(E_i)} i^{-1}x_j\Bigr\| \leqslant Ci^{1/p}/i \underset{i\to \infty}{\to} 0,$$ hence $(x_i)$ is weakly null.  Since $\mcf$ contains all singletons, the sequence must be seminormalized, and therefore some subsequence of $(x_i)$ is basic. This subsequence is a $K$-$\ell_p^\mcf$ spreading model.  The $K$-$c_0^\mcf$ spreading model is handled similarly.  These hypotheses on $\mcf$ will be satisfied whenever $\mcf=\mcs_\xi$ for $\xi>0$.    
\end{rem}

For $\ell_1^\xi$ spreading models we observe the following.

\begin{rem}
Let $\xi>0$.  An easy induction argument shows that if $M=(2n)_{n\in \mathbb{N}}$, then $\mcs_\xi[\mathcal{A}_2](M)\subset \mcs_\xi$.  If $(x_i)\subset X$, $c,C>0$ are so that for all $E\in \mcs_\xi$ and scalars $(a_i)_{i\in E}$,
$$c^{-1}\sum_{i\in E} |a_i|\leqslant \Bigl\|\sum_{j\in E} a_jx_j\Bigr\|\leqslant C\sum_{j\in E}|a_j|,$$ 
then $(x_i)$ satisfies the same inequalities with $\mcs_\xi$ replaced by $\mcs_\xi[\mathcal{A}_2](M)$.  By passing to the subsequence $(x_i)_{i\in M}$, we can assume the sequence itself satisfies these inequalities when $E\in \mcs_\xi[\mathcal{A}_2]$.  

If some subsequence of the sequence $(x_i)$ is equivalent to the unit vector basis of $\ell_1$, this subsequence is clearly basic.  Otherwise we can use Rosenthal's $\ell_1$ theorem \cite{Rosenthalell1}, pass to a subsequence, and assume $(x_i)$ is weakly Cauchy.  Then $y_i = (x_{2i}-x_{2i+1})/2$ defines an $\mathcal{A}_2$-absolutely convex blocking of $(x_i)$, and $(y_i)$ is weakly null.  Therefore some subsequence of $(y_i)$ is basic and hence a $K$-$\ell_1^\xi$ spreading model. 
\end{rem}

The following proof is a transfinite analogue of a well-known argument due to James \cite{JamesNonSquare} and a sharpening of a result of Judd and Odell \cite[Lemma 6.5]{JuddOdell}.

\begin{theorem} Let $X$ be a Banach space, $\xi< \omega_1$.  If $X$ contains an $\ell_1^{\omega^\xi}$ spreading model, then for any $\varepsilon>0$, $X$ contains a $(1+\varepsilon)$-$\ell_1^{\omega^\xi}$ spreading model.  More precisely, if $(x_i)\subset X$ is an $\ell_1^{\omega^\xi}$ spreading model, we can find a blocking $(y_i)$ of $(x_i)$ which is a $(1+\varepsilon)$-$\ell_1^{\omega^\xi}$ spreading model so that for each $F\in \mcs_{\omega^\xi}$, $\cup_{i\in F} \text{supp}_{(x_j)}y_i \in \mcs_{\omega^\xi}$.   \label{goodconstants}
\end{theorem}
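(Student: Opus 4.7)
The plan is to adapt James' classical blocking argument for $\ell_1$ to this transfinite spreading model setting. The main obstacle is not the averaging computation---which is essentially the same as James'---but the need to keep the Schreier complexity of the supports under control, and this is handled by Proposition \ref{Schreier facts}(iv) together with the additive indecomposability of $\omega^\xi$ for $\xi\geqslant 1$.

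After normalizing $(x_i)$ so the upper $\ell_1^{\omega^\xi}$-constant is $1$ (triangle inequality) and the lower constant is $1/K$ for some $K\geqslant 1$, I would define
\[
L \;:=\; \inf\Bigl\{\Bigl\|\sum_{i\in F} t_i x_i\Bigr\| : F\in \mcs_{\omega^\xi},\; t_i\geqslant 0,\; \sum_i t_i = 1\Bigr\} \;\in\; [K^{-1},1],
\]
passing to a tail so that $L$ is stable. Fix small $\eta>0$. Using (at limit $\xi$) the definition $\mcs_{\omega^\xi}=\{E:\exists n\leqslant E\in \mcs_{\xi_n}\}$ with $\xi_n\uparrow \omega^\xi$, every $\mcs_{\omega^\xi}$-admissible set with sufficiently large minimum lies in $\mcs_{\xi_n}$ for some $n$; consequently one can choose a single $\zeta^\ast<\omega^\xi$ together with a successive sequence of convex combinations $y_j=\sum_{i\in F_j}t_i^j x_i$ with $F_j\in \mcs_{\zeta^\ast}$ and $\|y_j\|\leqslant L(1+\eta_j)$, where $\eta_j\searrow 0$. (The successor case $\xi=\beta+1$ is similar using $\omega^{\beta+1}=\omega^\beta\cdot\omega$, and the base case $\xi=0$ is the classical James Schreier-$\ell_1$ argument.)

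Applying Proposition \ref{Schreier facts}(iv) to the successive sequence $(F_j)\subset \mcs_{\zeta^\ast}$ yields $N\in [\nn]$ so that $\bigcup_{j\in E}F_j\in \mcs_{\zeta^\ast+\omega^\xi}=\mcs_{\omega^\xi}$ for every $E\in \mcs_{\omega^\xi}(N)$. After replacing $(y_j)$ by its sub-indexing along $N$, the key calculation runs as follows. For any $F\in \mcs_{\omega^\xi}$ and scalars $(a_j)_{j\in F}$, writing $\epsilon_j:=\sgn(a_j)$ and noting that $(\epsilon_{j(i)}x_i)_i$ is again a $K$-$\ell_1^{\omega^\xi}$ spreading model,
\[
\sum_{j\in F} a_j\,\frac{y_j}{\|y_j\|} \;=\; \sum_i \frac{|a_{j(i)}|\,t_i^{j(i)}}{\|y_{j(i)}\|}\,\epsilon_{j(i)}x_i,
\]
where $j(i)$ is the unique $j$ with $i\in F_j$. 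The $\ell_1$-lower bound on $\bigcup_{j\in F}F_j\in \mcs_{\omega^\xi}$ gives a lower estimate of $K^{-1}\sum_j |a_j|/\|y_j\|\geqslant [KL(1+\eta)]^{-1}\sum|a_j|$, while the triangle inequality gives the matching upper bound $\sum|a_j|$, so $(y_j/\|y_j\|)$ is a $KL(1+\eta)$-$\ell_1^{\omega^\xi}$ spreading model with the required support condition.

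To upgrade the one-step ratio $KL(1+\eta)$ to $1+\varepsilon$: if $L=1$, then upper and lower bounds collapse and $(x_i)$ is already isometrically $\ell_1$-like on positive combinations, so a sign-trick using that $(\epsilon_i x_i)$ is also a $K$-spreading model upgrades this to a $(1+\varepsilon)$-$\ell_1^{\omega^\xi}$ spreading model on a further subsequence; if $L<1$, then $(y_j/\|y_j\|)$ is a strictly better spreading model than $(x_i)$, so iterating and diagonalizing (supports continue to compose correctly by Remark \ref{list}(iii)) produces the desired blocking in finitely many steps.
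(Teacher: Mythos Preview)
Your approach has a genuine gap at the step where you assert the existence of a single $\zeta^*<\omega^\xi$ with successive near-optimizers $F_j\in\mcs_{\zeta^*}$ satisfying $\|y_j\|\leqslant L(1+\eta_j)$.  The definition of $\mcs_{\omega^\xi}$ only tells you that each individual $F\in\mcs_{\omega^\xi}$ lies in $\mcs_{\xi_{\min F}}$; since the $F_j$ are successive, $\min F_j\to\infty$, and nothing prevents the near-optimal combinations from requiring $\xi_{n_j}$ with $n_j\to\infty$.  Concretely, it may happen that for every fixed $n$ the infimum over $\mcs_{\xi_n}$-supported convex combinations (taken along all tails) is strictly larger than $L$, while $L$ is only attained in the limit as $n\to\infty$.  ``Passing to a tail so that $L$ is stable'' does not address this: stability of $L$ along tails is a statement about $\min F$, not about the Schreier level of $F$.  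Without a fixed $\zeta^*$ you cannot invoke Proposition~\ref{Schreier facts}(iv), and the whole support-control mechanism collapses.

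The paper handles exactly this obstruction by replacing the infimum $L$ with a target value $K^{-1/2^m}$ and running a dichotomy: for each $n$, either some subsequence already satisfies the improved lower estimate on all $\mcs_{\xi_n}$-sets (``property $P_n$''), or no subsequence does, in which case one can extract successively infinitely many bad $\mcs_{\xi_n}$-supported combinations for that \emph{fixed} $n$.  Diagonalizing over $n$ in the first case, or blocking at the fixed level $\xi_n$ in the second, yields a $K^{1/2^m}$-$\ell_1^{\omega^\xi}$ spreading model with supports in $\mcs_{\zeta_m}$ for some $\zeta_m<\omega^\xi$, and finitely many iterations finish.  This dichotomy is what supplies the single $\zeta^*$ you were missing.

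Two secondary issues: first, in your displayed computation you use the spreading-model constant $K^{-1}$ for the lower bound rather than $L$ itself, so the one-step output is only a $KL(1+\eta)$-spreading model; since the new $L$ at the next stage can be arbitrarily close to $1$, your claim that iteration terminates ``in finitely many steps'' is not justified.  (Had the $\zeta^*$ claim been valid, using $L$ in place of $K^{-1}$ on nonnegative combinations would in fact give $(1+\eta)$ in one step.)  Second, your ``sign-trick'' passage from nonnegative to general coefficients is vague: the infimum $L$ you defined is over $t_i\geqslant 0$, and it is not automatic that it controls signed combinations.
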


\begin{proof}

We prove the result for $\xi>0$.  The result for $\xi=0$ is somewhat simpler, and involves using $\mathcal{A}_n$ in place of $\mcs_{\xi_n}$, where $\xi_n\uparrow \omega^\xi$ is the sequence of ordinals used to define $\mcs_{\omega^\xi}$.  Note that if $(x_i)$ is a $K$-$\ell_1^{\omega^\xi}$ spreading model, and if $c,C>0$ are as in the definition of $K$-$\ell_1^{\omega^\xi}$ spreading model, we can replace $x_i$ by $C^{-1}x_i$ and assume $(x_i)\subset B_X$, $c=K$, and $C=1$.  We will assume this.  We will prove by induction on $m\in \mathbb{N}$ that if $(x_i^0)\subset B_X$ is a $K$-$\ell_1^{\omega^\xi}$ that there exist a blocking $(x_i^m)\subset B_X$ of $(x_i^0)$ and an ordinal $\zeta_m<\omega^\xi$ so that $(x_i^m)$ is a blocking of $(x_i^{m-1})$, $(x_i^m)$ is a $K^{1/2^m}$-$\ell_1^{\omega^\xi}$ spreading model, and $\text{supp}_{(x_j^0)} x_i^m\in \mcs_{\zeta_m}$.  Of course, the base case $m=0$ is the hypothesis. 

Suppose we have found the blocking $(x_i^{m-1})$ and the ordinal $\zeta_{m-1}<\omega^\xi$.  We say a sequence $(y_i)$ in $X$ has property $P_n$ if for each $E\in \mcs_{\xi_n}$ with $n\leqslant E$ and for all scalars $(a_i)_{i\in E}$, $$K^{1/2^m}\Bigl\|\sum_{i\in E} a_iy_i\Bigr\|\geqslant \sum_{i\in E} |a_i|.$$  Note that if $(y_i)$ has property $P_n$ and if $(z_i)$ is a sequence in $X$ so that $z_i=y_i$ for all $i\geqslant n$, then $(z_i)$ also has property $P_n$.  Also, by the spreading property of the Schreier families, any subsequence of a sequence with property $P_n$ also has property $P_n$.  We consider two cases.  

In the first case, for all $n\in\mathbb{N}$ and for all $M\in [\nn]$, there exists $N\in [M]$ so that $(x_i^{m-1})_{i\in N}$ has property $P_n$.  In this case, we let $M_0=\nn$ and choose recursively $M_1, M_2, \ldots$ so that $M_n\in [M_{n-1}]$ and so that $(x_i^{m-1})_{i\in M_n}$ has property $P_n$.  Moreover, since property $P_n$ is invariant under redefining the first $n-1$ elements of a sequence, we can assume that the first $n-1$ elements of $M_n$ are the same as the first $n-1$ elements of $M_{n-1}$.  If we write $M_n=(m_i^n)$, we let $m_n=m_n^n$ and $M=(m_n)$.  Since $M\in [M_n]$ for all $n\in \nn$, $(x^{m-1}_i)_{i\in M}\subset $ has property $P_n$ for all $n\in \nn$.  We let $x^m_i=x_{m_i}^{m-1}$ and $\zeta_m=\zeta_{m-1}$.  This is clearly the desired sequence.  

In the second case, there exists $M\in [\nn]$ and $n\in \mathbb{N}$ so that no subsequence of $(x^{m-1}_i)_{i\in M}$ has property $P_n$.  By relabeling, we can assume $M=\nn$.  Let $\zeta_m=\zeta_{m-1}+\xi_n$.  Choose by Proposition \ref{Schreier facts} (iii) some $N=(n_i)\in [\nn]$ so that $$\mcs_{\omega^\xi}[\mcs_{\xi_n}](N)\subset \mcs_{\xi_n+\omega^\xi}=\mcs_{\omega^\xi}.$$  Let $$F_i=\text{supp}_{(x_j^0)} x^{m-1}_{n_i}\in \mcs_{\zeta_{m-1}}.$$  Since $(x_i^{m-1})$ is a $K^{1/2^{m-1}}$-$\ell_1^{\omega^\xi}$ spreading model, $(x_i^{m-1})_{i\in N}$ is a $K^{1/2^{m-1}}$-$\ell_1^{\mcs_{\omega^\xi}[\mcs_{\xi_n}]}$ spreading model.  Choose by Proposition \ref{Schreier facts} (iv) some $L\in [\nn]$ so that if $E\in \mcs_{\xi_n}(L)$, then $$\bigcup_{i\in E}F_i\in \mcs_{\zeta_{m-1}+\xi_n}=\mcs_{\zeta_m}.$$  Since no subsequence of $(x_{n_i}^{m-1})_{i\in L}$ has property $P_n$, we can find $E_1<E_2<\ldots$, $E_i\in \mcs_{\xi_n}$ and non-zero scalars $(a_j)$ so that for each $i$, $$\Bigl\|\sum_{j\in L(E_i)} a_j x ^{m-1}_{n_j}\Bigr\|< K^{-1/2^m} \text{\ \ and\ \ }\sum_{j\in L(E_i)} |a_j|=1.$$  Then $y_i=\sum_{j\in L(E_i)} a_jx^{m-1}_{n_j}$ is a $\mcs_{\xi_n}$ absolutely convex blocking of $(x^{m-1}_{n_i})$, which is a $K^{1/2^{m-1}}$-$\ell_1^{\mcs_{\omega^\xi}[\mcs_{\xi_n}]}$ spreading model, $(y_i)$ is a $K^{1/2^{m-1}}$-$\ell_1^{\omega^\xi}$ spreading model.  Then by homogeneity, $(x^m_i)=(K^{1/2^m}y_i)\subset B_X$ is a $K^{1/2^m}$-$\ell_1^{\omega^\xi}$ spreading model. This finishes the inductive step.  

We next choose $n_0\in \mathbb{N}$ so that $K^{1/2^{n_0}}<1+\varepsilon$ and $(\ell_i)=L$ according to Proposition \ref{Schreier facts} (iv) so that if $E\in \mcs_{\omega^\xi}(L)$, $$\cup_{i\in E} \text{supp}_{(x_j^0)} x_i^{n_0} \in \mcs_{\zeta_{n_0}+\omega^\xi}=\mcs_{\omega^\xi}.$$  The announced sequence is $(x^{n_0}_{\ell_i})$.  

The proof for $c_0^{\omega^\xi}$ spreading models is similar, except we reverse the inequalities.  Given $(x_i^0)$, we can assume $c=1$, $C=K$, and $\|x_i^0\|\geqslant 1$ for all $i\in \nn$.  We find successive blockings $(x^m_i)$ and ordinals $\xi_m<\omega^\xi$ so that $\|x_i^m\|\geqslant 1$, for all $E\in \mcs_{\omega^\xi}$ and scalars $(a_i)_{i\in E}$, $$\Bigl\|\sum_{i\in E}a_ix^m_i\Bigr\|\leqslant K^{1/2^m} \max_{i\in E}|a_i|,$$  and so that $\text{supp}_{(x_j^0)}x^m_i\in \mcs_{\xi_m}$.  Suppose $n_0\in \nn$ is chosen so that $K^{1/{2^{n_0}}}<1+\delta$, where $\delta\in (0,1)$ is so small that $(1+\delta)/(1-\delta)<1+\varepsilon$.  Choose $E\in \mcs_{\omega^\xi}$ and scalars $(a_i)_{i\in E}$ so that $\max_{i\in E}|a_i|=1$.  We assume there exists $j\in E$ so that $a_j=1$.  Let $w=\sum_{i\in E} a_i x^{n_0}_i$ and let $w'=a_jx^{n_0}_j-\sum_{i\in E\setminus (j)}a_ix_i^{n_0}$.  Then $$2\leqslant 2\|x^{n_0}_j\| = \|w+w'\| \leqslant \|w\|+\|w'\| \leqslant 1+\delta+ \|w'\|,$$ whence $\|w'\|\geqslant 1-\delta$.  This implies $(x_i^{n_0})$ is a $(1+\varepsilon)$-$c_0^{\omega^\xi}$ spreading model.

\end{proof}

\begin{remark}

We observe that for any $1<p<\infty$, we can replace $1$-absolutely convex blockings with $p$-absolutely convex blockings in the first argument to deduce that if $X$ contains a $K$-$\ell_p^{\omega^\xi}$ spreading model, then for any $\varepsilon>0$, there exists a $K$-$\ell_p^{\omega^\xi}$ spreading model $(x_i)\subset B_X$ so that for each $E\in \mcs_{\omega^\xi}$ and scalars $(a_i)_{i\in E}$, $$\Bigl(\sum_{i\in E}|a_i|^p\Bigr)^{1/p}\leqslant (1+\varepsilon)\Bigl\|\sum_{i\in E} a_ix_i\Bigr\|.$$  But in this case, tight $\ell_p$ upper estimates do not follow as in the $\ell_1$ case.  In fact, the theorem is false in this case, otherwise $\ell_p$ would not be distortable.  Similarly, in the second argument we can replace $\infty$-absolutely convex blockings with $p$-absolutely convex blockings to deduce that if $X$ contains a $K$-$\ell_p^{\omega^\xi}$ spreading model, then for $\varepsilon>0$, there exists a $K$-$\ell_p^{\omega^\xi}$ $(x_i)$ in $X$ so that $\|x_i\|\geqslant 1$ for all $i\in \nn$ and so that for all $E\in \mcs_{\omega^\xi}$ and scalars $(a_i)_{i\in E}$, $$\Bigl\|\sum_{i\in E}a_ix_i\Bigr\|\leqslant (1+\varepsilon)\Bigl(\sum_{i\in E}|a_i|^p\Bigr)^{1/p}.$$

\end{remark}

We now state the following lemma that will be used to prove one of our main theorems. Essentially it states that if $(x_i)$ is either an $\ell_1^{\omega^\xi}$ or $c_0^{\omega^\xi}$ spreading model with respect to two equivalent norms on $X$, we can block the spreading model to improve its constant with respect to one norm {\it without worsening the constant with respect to the other norm}.  

\begin{lemma}
Suppose that $X$ is a Banach space and $(x_i)$ is a $C$-$\ell_1^{\omega^\xi}$ spreading model in $(X, \|\cdot\|)$.  Let $|\cdot|$ be an equivalent norm on $X$. Then for each $\ee>0$ there is a block $(y_i)$ of $(x_i)$  satisfying 
\begin{itemize}
\item[(i)] The sequence $(y_i)$ is a $(1+\ee)-\ell_1^{\omega^\xi}$ spreading model in $(X,|\cdot|)$.
\item[(ii)] The sequence $({\color{blue}\lambda}y_i)$ is a $C-\ell_1^{\omega^\xi}$ spreading model in $(X,\|\cdot\|)$.
\end{itemize}
\end{lemma}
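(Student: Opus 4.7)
My plan is to apply Theorem \ref{goodconstants} to the sequence $(x_i)$ but with the equivalent norm $|\cdot|$ in place of $\|\cdot\|$. Since the two norms are equivalent, there exist $\alpha,\beta>0$ with $\alpha\|\cdot\|\leqslant |\cdot|\leqslant \beta\|\cdot\|$, so $(x_i)$ remains a $K$-$\ell_1^{\omega^\xi}$ spreading model in $(X,|\cdot|)$ for some $K$ of order $C\beta/\alpha$. Theorem \ref{goodconstants} then yields a blocking $(y_i)$ of $(x_i)$ that is a $(1+\ee)$-$\ell_1^{\omega^\xi}$ spreading model in $(X,|\cdot|)$, immediately giving (i), and that also satisfies the crucial support condition: $\bigcup_{i\in F}\supp_{(x_j)}y_i\in\mcs_{\omega^\xi}$ for every $F\in\mcs_{\omega^\xi}$.

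The key observation for (ii) is that the blocking produced by the proof of Theorem \ref{goodconstants} is in fact a $1$-absolutely convex blocking of $(x_i)$. Tracing the inductive construction, each stage is either a subsequence or an $\mcs_{\xi_n}$-absolutely convex blocking of the previous sequence; and a composition of $1$-absolutely convex blockings is again a $1$-absolutely convex blocking, since if $y=\sum_k c_k z_k$ with $\sum_k|c_k|=1$ and each $z_k=\sum_j d_j^{(k)}x_j$ with $\sum_j |d_j^{(k)}|=1$, then the coefficients of $y$ in the $(x_j)$-expansion also sum (in absolute value) to $\sum_k|c_k|\sum_j|d_j^{(k)}|=1$. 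We may therefore write $y_i=\sum_{j\in E_i}b_j^{(i)}x_j$ with $(E_i)$ successive, $E_i=\supp_{(x_j)}y_i$, and $\sum_{j\in E_i}|b_j^{(i)}|=1$.

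To derive (ii), I would fix $F\in\mcs_{\omega^\xi}$ and scalars $(a_i)_{i\in F}$ and expand
$$\sum_{i\in F}a_i y_i = \sum_{j\in\bigcup_{i\in F}E_i}\bigl(a_{i(j)}b_j^{(i(j))}\bigr)x_j,$$
where $i(j)$ is the unique index with $j\in E_{i(j)}$. The support clause places the set $\bigcup_{i\in F}E_i$ in $\mcs_{\omega^\xi}$, so the original $C$-$\ell_1^{\omega^\xi}$ estimates for $(x_i)$ in $(X,\|\cdot\|)$ apply to this expansion; since $\sum_{j\in E_i}|a_i b_j^{(i)}|=|a_i|$, the combined $\ell_1$-weight collapses to $\sum_{i\in F}|a_i|$, giving $(y_i)$ the very same $C$-$\ell_1^{\omega^\xi}$ bounds in $(X,\|\cdot\|)$. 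In particular $\lambda=1$ works in (ii), with the option of rescaling if a specific normalization is desired.

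The only point of care is the bookkeeping verification that the blocking delivered by the proof of Theorem \ref{goodconstants} is $1$-absolutely convex; once this composition fact is in hand, the lemma is a direct consequence of Theorem \ref{goodconstants} combined with its support-preservation clause, together with the triangle-style rewriting displayed above.
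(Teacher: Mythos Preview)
Your proof is correct and follows exactly the paper's approach: apply Theorem \ref{goodconstants} in the norm $|\cdot|$ and invoke the support-preservation clause to retain the $C$-constant in $\|\cdot\|$. Your explicit observation that the blocking is (up to a global scalar, coming from the rescalings $x_i^m=K^{1/2^m}y_i$ in the second case of that proof) a $1$-absolutely convex block of $(x_i)$ makes precise what the paper's ``clearly implies'' leaves implicit, since the support condition alone would not force the $C$-constant to persist without knowing that $\sum_{j\in E_i}|b_j^{(i)}|$ is independent of $i$.
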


\begin{proof} This is an immediate consequence of Theorem \ref{goodconstants}.  The blocking $(y_i)$ of $(x_i)$ which is a $(1+\varepsilon)$-$\ell_1^{\omega^\xi}$ spreading model in $(X, |\cdot|)$ is so that if $E\in \mcs_{\omega^\xi}$, $\cup_{i\in E} \text{supp}_{(x_j)}y_i\in \mcs_{\omega^\xi}$, which clearly implies that $(y_i)$ is still a $C$-$\ell_1^{\omega^\xi}$ spreading model in $(X, \|\cdot\|)$.

\end{proof}

We use the above lemma to prove the following

\begin{proposition} If $X$ is a Banach space with basis $(e_i)$ and $0 \leqslant \xi< \omega_1$ is such that $X$ contains either an $\ell_1^{\omega^\xi}$ or a $c_0^{\omega^\xi}$ spreading model, then $D_{1+\varepsilon}(X)>\omega^\xi$ for any $\varepsilon>0$.   \label{maincorollary}
\end{proposition}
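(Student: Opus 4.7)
The plan is to show that for every equivalent norm $|\cdot|$ on $X$ and every $\varepsilon>0$, there exists a normalized block sequence $(z_i)$ of $(e_i)$ such that whenever $E\in\mcs_{\omega^\xi}$ and $x,y\in[z_i]_{i\in E}$ satisfy $\|x\|=\|y\|=1$, one has $|x|/|y|\leqslant 1+\varepsilon$. This rules out every equivalent norm as a $(1+\varepsilon)$-$\mcs_{\omega^\xi}$ distortion, whence $D_{1+\varepsilon}(X)>\omega^\xi$.

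First I would arrange the hypothesized $\ell_1^{\omega^\xi}$ (or $c_0^{\omega^\xi}$) spreading model to be a block basis of $(e_i)$. Using the remarks preceding Theorem \ref{goodconstants} we may assume the sequence is weakly null and seminormalized; a standard gliding-hump perturbation then produces a subsequence that is a block basis of $(e_i)$, preserving the spreading-model property with only a mild deterioration of constants. Call this sequence $(x_i)$.

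The heart of the argument is to apply the preceding Lemma twice, alternating the roles of the two norms. Fix $\varepsilon'>0$ with $(1+\varepsilon')^2\leqslant 1+\varepsilon$. The first application produces a blocking $(y_i)$ of $(x_i)$ that is $(1+\varepsilon')$-$\ell_1^{\omega^\xi}$ spreading in $(X,|\cdot|)$ while remaining a $C$-$\ell_1^{\omega^\xi}$ spreading model in $(X,\|\cdot\|)$. A second application of the Lemma, now viewing $(y_i)$ as a $(1+\varepsilon')$-spreading model in $(X,|\cdot|)$ and treating $\|\cdot\|$ as the equivalent norm, yields a further blocking $(z_i)$ that is $(1+\varepsilon')$-$\ell_1^{\omega^\xi}$ spreading in $(X,\|\cdot\|)$ and still $(1+\varepsilon')$-$\ell_1^{\omega^\xi}$ spreading in $(X,|\cdot|)$. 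Being a blocking of a block basis of $(e_i)$, the sequence $(z_i)$ is itself a block basis, which we normalize in $\|\cdot\|$.

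The ratio estimate is then routine. For $E\in\mcs_{\omega^\xi}$ and $x=\sum_{i\in E}a_iz_i$, $y=\sum_{i\in E}b_iz_i$ with $\|x\|=\|y\|=1$, the upper and lower $\ell_1$ estimates in $\|\cdot\|$ trap each of $\sum_{i\in E}|a_i|$ and $\sum_{i\in E}|b_i|$ in an interval of endpoint-ratio at most $1+\varepsilon'$. Combining the upper $\ell_1$ estimate in $|\cdot|$ applied to $x$ with the lower $\ell_1$ estimate in $|\cdot|$ applied to $y$ then yields $|x|/|y|\leqslant (1+\varepsilon')^2\leqslant 1+\varepsilon$. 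The $c_0^{\omega^\xi}$ case is handled symmetrically by the obvious $c_0$-analogue of the Lemma, derived from the $c_0$ half of Theorem \ref{goodconstants} by the same argument that produces the stated $\ell_1$ Lemma; one then obtains a block basis $(z_i)$ that is $(1+\varepsilon')$-$c_0^{\omega^\xi}$ spreading in both norms, and the analogous $\ell_\infty$ estimates give the same ratio bound. The main technical point is the simultaneous bi-normal control in the iteration step, which depends crucially on the support-preservation clause of Theorem \ref{goodconstants} that makes the Lemma operate symmetrically under swapping the two norms.
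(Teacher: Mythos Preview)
Your proposal is correct and follows essentially the same approach as the paper: pass to a block of the given spreading model that is a block of $(e_i)$, use the support-preservation clause of Theorem~\ref{goodconstants} (packaged as the Lemma) to obtain a further block with $(1+\varepsilon')$-$\ell_1^{\omega^\xi}$ (resp.\ $c_0^{\omega^\xi}$) constants simultaneously in both norms, and then read off the ratio bound $(1+\varepsilon')^2$. The only cosmetic difference is that the paper first applies Theorem~\ref{goodconstants} directly to get good constants in $\|\cdot\|$ and then invokes the Lemma once for $|\cdot|$, whereas you invoke the Lemma twice with the roles of the norms swapped; the content is identical.
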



\begin{proof}
If $\ell_1\hookrightarrow X$, then we reach the conclusion by Proposition $2$ and the fact that $\ell_1$ is not distortable.  So we assume $X$ contains no copy of $\ell_1$. In this case, Remark $2$ implies that if $X$ contains an $\ell_1^\zeta$ spreading model for some $\zeta>0$, it contains one which is weakly null.  Therefore we apply a standard perturbation argument and Theorem $1$ to deduce the existence of a block sequence $(x_i)\subset B_X$  which is a $(1+\delta)$-$\ell_1^{\omega^\xi}$ spreading model, $\delta>0$ to be determined.  Let $|\cdot|$ be an equivalent norm on $X$.  By the previous remark, we can find a blocking $(y_i)$ which is a $(1+\delta)$-$\ell_1^{\omega^\xi}$ spreading model in $(X, \|\cdot\|)$ and a $(1+\delta)$-$\ell_1^{\omega^\xi}$ spreading model in $(X, |\cdot|)$.  Then there exist constants $a,a_0,b,b_0>0$ so that $ab, a_0b_0\leqslant 1+\delta$ so that for $E\in \mcs_{\omega^\xi}$, $(a_i)_{i\in E}$, and $x=\sum_{i\in E}a_iy_i$, $$a^{-1}\sum_{i\in E}|a_i| \leqslant \|x\|\leqslant b\sum_{i\in E}|a_i|$$ and $$
a_0^{-1}\sum_{i\in E}|a_i|\leqslant |x|\leqslant b_0\sum_{i\in E}|a_i|.$$  
Fix $E\in \mcs_{\omega^\xi}$ and $x=\sum_{i\in E}a_iy_i, y=\sum_{i\in E} b_iy_i\in S_X$.  Then $$\frac{|x|}{|y|} \leqslant\frac{b_0\sum_{i\in E}|a_i|}{a_0^{-1}\sum_{i\in E}|b_i|} \leqslant\frac{b_0a\|x\|}{a_0^{-1}b^{-1}\|y\|} \leqslant (1+\delta)^2.$$  With an appropriate choice of $\delta\geq 0$, we reach the conclusion.  

 The proof in the $c_0$ case is similar.  
\end{proof}

While the following is an aside, it is worth observing. It states that not containing an $\ell_1^{\omega^\xi}$ spreading model is a three space property. The proof is very similar to the proof of Theorem \ref{goodconstants}.  

\begin{proposition} Let $X$ be a Banach space, $Y$ a closed subspace, and $\xi< \omega_1$.  Then $X$ contains an $\ell_1^{\omega^\xi}$ spreading model if and only if $Y$ or $X/Y$ does.  \end{proposition}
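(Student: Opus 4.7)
The direction $(\Leftarrow)$ is straightforward: if $Y$ contains an $\ell_1^{\omega^\xi}$ spreading model then so does $X$, while if $X/Y$ contains a $K$-$\ell_1^{\omega^\xi}$ spreading model $(\bar x_i)$, lift each $\bar x_i$ to some $x_i\in X$ with $\|x_i\|\leqslant 2\|\bar x_i\|$. The lower $\ell_1^{\omega^\xi}$ estimate transfers to $(x_i)$ since $\|x\|\geqslant \|q(x)\|$ where $q: X\to X/Y$ is the quotient map, the upper estimate is immediate from the triangle inequality, and a basic subsequence is obtained by standard means (Remark \ref{list} together with a Rosenthal-type perturbation).

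For $(\Rightarrow)$, suppose $X$ contains an $\ell_1^{\omega^\xi}$ spreading model $(x_i)$; we shall show that $Y$ or $X/Y$ contains one. By Theorem \ref{goodconstants} we may assume $(x_i)\subset B_X$ is a $(1+\eta)$-$\ell_1^{\omega^\xi}$ spreading model with the support-control property that $\bigcup_{i\in F}\supp x_i\in \mcs_{\omega^\xi}$ for every $F\in \mcs_{\omega^\xi}$. The plan is to run the following dichotomy on the images $(q(x_i))$ in $X/Y$: either some blocking $(y_i)$ of $(x_i)$ gives a sequence $(q(y_i))$ satisfying a lower $\ell_1^{\omega^\xi}$ estimate in $X/Y$, in which case, after passing to a basic subsequence, $(q(y_i))$ is the desired $\ell_1^{\omega^\xi}$ spreading model in $X/Y$; or for every blocking and every threshold the lower $\ell_1^{\omega^\xi}$ estimate fails, meaning arbitrarily small-in-$X/Y$ blocks exist in every blocking.

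In the second scenario, we mimic the bad-case machinery from the proof of Theorem \ref{goodconstants}: we extract successive $\mcs_{\xi_n}$-absolutely convex blocks $w_k=\sum_j a_j^k x_{i_j}$ of $(x_i)$ with $\sum_j|a_j^k|=1$ and $\|q(w_k)\|$ summably small, controlling the Schreier level $n$ so that by Remark \ref{list}(iii) and Proposition \ref{Schreier facts}(iii) the sequence $(w_k)$ remains an $\ell_1^{\omega^\xi}$ spreading model in $(X,\|\cdot\|)$ with uniformly bounded constant. Choosing $z_k\in Y$ with $\|w_k - z_k\| < 2\|q(w_k)\|$ and passing to a basic subsequence, a standard perturbation of the $\ell_1^{\omega^\xi}$ estimate produces $(z_k)$ as an $\ell_1^{\omega^\xi}$ spreading model in $Y$, completing the proof. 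The main obstacle is simultaneously controlling the Schreier level $n$ in the extraction and ensuring summable smallness of $\|q(w_k)\|$ along a single sequence; this is handled by parametrizing failure by a threshold $\delta>0$ (failure of which furnishes blocks with $\|q(\cdot)\|<\delta$), diagonalizing over $\delta\to 0$, and applying Proposition \ref{Schreier facts}(iv) to ensure that the unions $\bigcup_{k\in F}\supp w_k$ lie in $\mcs_{\omega^\xi}$ for $F\in \mcs_{\omega^\xi}$.
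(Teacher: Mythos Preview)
Your approach matches the paper's dichotomy, but you introduce an unnecessary complication that leaves a gap. You aim for $\|q(w_k)\|$ \emph{summably small} by diagonalizing over thresholds $\delta\to 0$; however, for each $\delta$ the failure of the lower $\ell_1$ estimate in $X/Y$ may occur at a different Schreier level $\xi_{n(\delta)}$, so the supports $E_k=\supp_{(x_i)}w_k$ land in $\mcs_{\xi_{n_k}}$ with $n_k$ potentially unbounded. Proposition~\ref{Schreier facts}(iv), which you invoke to obtain $\bigcup_{k\in F}E_k\in\mcs_{\omega^\xi}$ for $F\in\mcs_{\omega^\xi}$, requires all the $E_k$ to lie in a \emph{single} $\mcs_\zeta$; with $n_k\to\infty$ this hypothesis fails, and you have not explained how to recover the $\ell_1^{\omega^\xi}$ spreading model structure of $(w_k)$ in $X$.

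The paper sidesteps this entirely: summable smallness is not needed. Fix one $K_0>K$ (where $(x_i)\subset B_X$ satisfies the lower estimate with constant $K^{-1}$) and run the dichotomy \emph{once}. In the second branch you obtain a single $n$ and $\mcs_{\xi_n}$-absolutely convex blocks $(z_i)$ with $\|z_i+Y\|_{X/Y}<K_0^{-1}$; since $n$ is fixed, $(z_i)$ is still a $K$-$\ell_1^{\omega^\xi}$ spreading model in $X$ by Remark~\ref{list}(iii) and Proposition~\ref{Schreier facts}(iii). Now choose $y_i\in 2B_Y$ with $\|z_i-y_i\|<K_0^{-1}$ and compute directly, for $E\in\mcs_{\omega^\xi}$,
\[
\Bigl\|\sum_{i\in E}a_iy_i\Bigr\|\geqslant\Bigl\|\sum_{i\in E}a_iz_i\Bigr\|-\sum_{i\in E}|a_i|\,\|z_i-y_i\|\geqslant(K^{-1}-K_0^{-1})\sum_{i\in E}|a_i|.
\]
No tail-summability, no further perturbation, and no support bookkeeping across varying levels is required.
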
 

\begin{proof}

It is known that $X$ contains a copy of $\ell_1$ if and only if either $Y$ or $X/Y$ does, so assume that none of these three spaces contains a copy of $\ell_1$.  This assumption allows us to use Remark $2$ to deduce that it is sufficient to find a sequence in the unit ball of the appropriate space which satisfies the desired lower estimate.  That is, we do not need this sequence to be basic, since it will have a blocking which is an $\ell_1^{\omega^\xi}$ spreading model.  Again, we include the details only of the $\xi>0$ case.  Let $\xi_n\uparrow \omega^\xi$ be the ordinals used to define $\mcs_{\omega^\xi}$.  

Suppose $X$ contains a $K$-$\ell_1^{\omega^\xi}$ spreading model $(x_i)$.  Again, we assume $c=K$, $C=1$, and $(x_i)\subset B_X$. Let us say $(u_i)\subset X$ has property $P_n$ if for all $n\leqslant E\in \mcs_{\xi_n}$ and scalars $(a_i)_{i\in E}$, $$K_0\Bigl\|\sum_{i\in E} a_i u_i +Y\Bigr\|_{X/Y} \geqslant \sum_{i\in E} |a_i|,$$ where $K_0>K$ is fixed.  As in the proof of Theorem \ref{goodconstants}, we either pass to a subsequence $(x_i)_{i\in N}$ of $(x_i)$ which has property $P_n$ for all $n$ or there exists an $n\in \nn$ and an $\mcs_{\xi_n}$-absolutely convex blocking $(z_i)$ which is also a $K$-$\ell_1^{\omega^\xi}$ spreading model in $X$ and so that $\|z_i+Y\|_{X/Y}<K_0^{-1}$ for all $i$.  In the first case, the sequence $(z_i+Y)\subset B_{X/Y}$ and satisfies the desired lower estimate with constant $K_0$.  In the second case, choose for each $i\in \nn$ some $y_i\in 2B_Y$ so that $\|z_i-y_i\|<K_0^{-1}$ and let $\delta=K^{-1}-K_0^{-1}$.  Then if $E\in \mcs_{\omega^\xi}$ and $(a_i)_{i\in E}$, $$\Bigl\|\sum_{i\in E} a_i y_i\Bigr\|\geqslant \Bigl\|\sum_{i\in E}a_iz_i\Bigr\|-\sum_{i\in E}|a_i|\|z_i-y_i\| \geqslant \delta\sum_{i\in E}|a_i|.$$ The upper $\ell_1$ estimates on $(y_i)$ follow from the fact that $(y_i)\subset 2B_Y$.  

The other direction is trivial.  \end{proof}

It is worth pointing out that an analogous result cannot be stated for $\ell_p^{\omega^\xi}$, $p>1$, or $c_0^{\omega^\xi}$ spreading models. For instance, if $X = \ell_1$, then there exists a subspace $Y$ of $X$ such that the space $X/Y$ is isometric to $\ell_p$ or $c_0$. In particular, $X/Y$ contains an $\ell_p^{\omega^\xi}$ or $c_0^{\omega^\xi}$, for every countable ordinal number $\xi$, however $X$ does not contain any order of $\ell_p$ or $c_0$ spreading models.

\section{Computing the distortion index for certain spaces}

In this section we compute or bound the distortion indices for several spaces. As stated in the introduction, the present paper was inspired by a question of P. Dodos question on MathOverFlow \cite{DodosMO}. Here, we resolve several of the queries found there. In particular, we observe that that $AD(S)=2$ where $S$ is Schlumprecht's space, $AD(X)\leqslant 2$ for any asymptotic $\ell_p$ space $X$ with $1<p$, and for every countable ordinal $\xi$ there is an arbitrarily distortable space $X$ such that $AD(X)>\xi$. Also, as noted by Dodos, for each countable $\zeta \geqslant 1$ there is a mixed Tsirelson space $X_\zeta$ \cite[Chapter 13]{ATMemoirs} (that is a higher order analogue of the asymptotic $\ell_1$ mixed Tsirelson space of Argyros-Deliyanni \cite{AD}) that is arbitrarily distortable and asymptotic $\ell_1^\zeta$. For $\zeta=\omega^\xi$, Proposition \ref{maincorollary} implies that $AD(X_\xi)>\xi$. We also present the examples $(\Xox)_{\xi <\omega_1}$ which were first introduced in the paper \cite{ABM-Illinois} by S.A. Argyros, the first and third authors. For these spaces we are able to calculate the exact index; namely, we show that $AD(X)=\omega^\xi+1$ for any block sequence $X$ of $\Xox$. As these spaces have the property that in every subspace there are exactly two spreading models $c_0$ and $\ell_1$, this answers, in the negative, the conjecture of Dodos which asked whether an arbitrarily distortable space with $AD(X)>1$ contains an asymptotic $\ell_p$ space. We note that $S$ also serves as a counterexample to this conjecture.

\subsection{Tsirelson space $T$, Schlumprecht space $S$, and asymptotic $\ell_p$ spaces}
Let $T$ denote the Figiel-Johnson Tsirelson space \cite{FigielJohnson,T}. We note that $T$ is asymptotic $\ell_1$. This implies that any normalized block sequence in $T$ is an $\ell_1^1$ spreading model.  Therefore by Proposition 3 we can deduce that $D_{1+\varepsilon}(Y)>1$ for all $\varepsilon>0$ and any block subspace $Y$ of $T$.  In \cite{OdellTJWagner}, it is shown that Tsirelson space is $(2-\ee)$-distortable for every $\ee>0$ (also see \cite[pgs. 1343 - 1343]{OS-Handbook}). This proof roughly goes as follows: For every $n \inn $ one can find $\ell_1^n$ averages with good constants and {\it rapidly increasing sequences} of $\ell_1^n$ averages -- typically called RIS vectors.  The $\ell_1^n$ averages in any block sequences have supports in $\mcs_1$, while the RIS vectors can be realized with supports in $\mcs_2$. Since these vectors witness the appropriate $2-\ee$ distortion, for any $1<t<2$, $D_t(T)=2$.  

Similarly, for the space $S$ we have $D_{1+\varepsilon}(S)>1$, since $S$ contains an $\ell_1^1$ spreading model \cite{KL}.  But $S$ can be arbitrarily distorted by $\ell_1^n$ averages and RIS vectors, which can again be found in any normalized block with supports in $\mcs_1$ and $\mcs_2$, respectively.  Therefore we deduce that $AD(S)=2$.  

We also observe that in their famous solution to the distortion problem, Odell and Schlumprecht \cite{OSdistortion} used a generalization of the Mazur map to prove that for $1<p<\infty$, $\ell_p$ is arbitrarily distortable.  The construction involved using appropriate pointwise products of sequences of RIS vectors in $S$ and norming functionals in $S^*$ to construct sequences in $\ell_1$ and then transport them to $\ell_p$ to construct the norms which witness the distortion.  Since the generalization of the Mazur map preserves supports, the processes of taking pointwise products and of taking images under this generalization can only reduce supports.  This means $AD(\ell_p)\leqslant 2$.  Moreover, Maurey's proof \cite{Maureydistortion} that for $1<p<\infty$, asymptotic $\ell_p$ spaces are arbitrarily distortable uses a process similar to that of Odell and Schlumprecht, and yields the same conclusion: If $1<p$, and if $X$ is an asymptotic $\ell_p$ Banach space, $AD(X)\leqslant 2$.  

\subsection{The spaces $(\Xox)_{\xi < \omega_1}$}

The rest of the paper is dedicated to defining and providing the relevant facts concerning the spaces $\Xox$ each $\xi$ with $1 \leqslant \xi <\omega_1$ the space $\Xox$ is reflexive with a 1-unconditional basis. In \cite{OS-Krivine}, Odell and Schlumprecht introduced the method, a form of which used to construct the space $\Xox$. In this paper they construct a space having the property that every unconditional basic sequence is finitely block represented in every subspace. 

In \cite{ABM-Illinois}, a thorough study of the spaces $\X$ is undertaken. Here it is shown that the spaces are quasi-minimal and every subspace admits only $c_0$ and $\ell_1$ spreading models. In a subsequent paper \cite{AM-PLMS}, S.A. Argyros and the third author use these spaces to provide the first reflexive spaces so that every operator on a subspace has a non-trivial invariant subspace. In \cite{ABM-Illinois}, the spaces $\Xxi$ are introduced for countable $\xi$, however, many of the properties were not proved. The next proposition includes the properties of $\Xox$ that we  need to compute $AD(\Xox)$. In the final section we give the definition of $\Xox$ and prove this proposition. 

\begin{proposition}
If $(x_i)$ is a normalized block sequence, either there exists $M\in \infin$ so that $(x_i)_{i\in M}$ is an $\ell_1^{\omega^\xi}$ spreading model or for any $\varepsilon>0$, there exists $M\in [\nn]$ so that $(x_i)_{i\in M}$ is a $(1+\varepsilon)$-$c_0^1$ spreading model. 
In particular we have:
\begin{itemize}
\item[(i)]If $(x_i)_{i\in M}$ is a $c_0^1$ spreading model, there exists a sequence $(E_i)$ of successive elements of $\mcs_1$ so that $y_i=\sum_{j\in E_i}x_j$ is an $\ell_1^{\omega^\xi}$ spreading model.
\item[(ii)]If $(x_i)_{i\in M}$ is an $\ell_1^{\omega^\xi}$ spreading model and $\varepsilon>0$, there exists a sequence $(E_i)$ of successive elements of $\mcs_{\omega^\xi}$ and scalars $(a_j)$ so that $y_i = \sum_{j\in E_i}a_j x_j$ is a $(1+\varepsilon)$-$c_0$ spreading model. 
\end{itemize}
\label{properties of the space}
\end{proposition}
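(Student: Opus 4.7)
The proof has three pieces: establishing the dichotomy, then the blocking claims (i) and (ii). All three rely on the structure of the norming set of $\Xox$, which (as to be detailed in Section 6) is built in the Odell--Schlumprecht / mixed-Tsirelson spirit: it consists of iterated weighted averages of basis functionals supported on $\mcs_{\omega^\xi}$-admissible collections. The plan is to exploit this layered structure, together with the combinatorics of Proposition \ref{Schreier facts}, to identify both kinds of asymptotic behavior inside any normalized block $(x_i)$.

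For the dichotomy, I would first invoke the result proved in \cite{ABM-Illinois} that every spreading model generated by a normalized block sequence in $\Xox$ is equivalent to the unit vector basis of either $c_0$ or $\ell_1$; so after passing to a subsequence we are in one of these two regimes. In the $\ell_1$ regime, the presence of $\mcs_{\omega^\xi}$-admissible functionals in the norming set of $\Xox$ gives, essentially by evaluating such a functional on $\sum_{i\in E} a_i x_i$ for $E\in \mcs_{\omega^\xi}$, an $\ell_1^{\omega^\xi}$ lower estimate; combined with the trivial $\ell_1$ upper estimate from $\|x_i\|=1$, this upgrades the $\ell_1$ spreading model to an $\ell_1^{\omega^\xi}$ spreading model. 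In the $c_0$ regime, I would run the James/Theorem \ref{goodconstants}-style iteration: repeatedly block and renormalize to divide the $c_0$-constant by $2$ each time; here only level-one admissibility ($\mcs_1$) is needed because the upper $\ell_\infty$ estimate on normalized blocks in $\Xox$ already has constant controlled at this level. After finitely many iterations this produces a $(1+\varepsilon)$-$c_0^1$ spreading model.

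For part (i), given a $c_0^1$ spreading model $(x_i)_{i\in M}$, form $y_i = \sum_{j\in E_i} x_j$ with $(E_i)$ successive in $\mcs_1$; the upper bound on $\|y_i\|$ is immediate from the $c_0^1$-estimate and the lower bound by a single basis functional is $\geqslant 1$. The decisive computation is to show these $y_i$ form an $\ell_1^{\omega^\xi}$ spreading model. Given $F\in \mcs_{\omega^\xi}$ and scalars $(b_i)_{i\in F}$, one constructs a norming functional for $\sum b_i y_i$ by combining coordinate functionals into an $\mcs_1$-block at the inner layer and then an $\mcs_{\omega^\xi}$-admissible combination at the outer layer; the containment $\mcs_{\omega^\xi}[\mcs_1]\subset \mcs_{\omega^\xi}$ after a spread (Proposition \ref{Schreier facts}(ii), (iv)) guarantees that this combined functional belongs to the norming set. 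For part (ii), I would apply the standard rapidly-increasing-sequence argument: using Theorem \ref{goodconstants} to assume $(x_i)_{i\in M}$ is a $(1+\varepsilon)$-$\ell_1^{\omega^\xi}$ spreading model, pick $E_i\in \mcs_{\omega^\xi}$ with sufficiently fast growth and scalars $(a_j)_{j\in E_i}\in S_{\ell_1}$, chosen so that each $y_i=\sum_{j\in E_i}a_j x_j$ is a good convex $\ell_1^{\omega^\xi}$-average of norm close to $1$. A standard tree analysis on the norming set of $\Xox$ then shows that when such $\mcs_{\omega^\xi}$-averages are combined with coefficients $(b_i)$, the norm is dominated by $(1+\varepsilon)\max_i |b_i|$, because every norming functional can only be "charged" against at most one of the $y_i$'s at full strength, the remainder contributing a negligible amount due to the choice of growth.

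The main obstacle will be part (i): while it is immediate that sums of length $\mcs_1$ of norm-one vectors give $\ell_1^1$-behavior, promoting this to $\ell_1^{\omega^\xi}$ requires detailed knowledge of the outer $\mcs_{\omega^\xi}$-layer of the norm and careful verification that the $(E_i)\subset \mcs_1$ partition can be interlaced with an $\mcs_{\omega^\xi}$-admissible functional family to produce the required lower estimate. The RIS argument in (ii) is a standard mixed-Tsirelson computation but still demands careful tracking of weight bookkeeping and admissibility, which will be set up in Section 6 alongside the definition of $\Xox$.
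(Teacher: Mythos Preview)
Your proposal captures the broad architecture but misses the central organizing tool the paper uses, and in one place the stated mechanism is not correct.

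The paper does \emph{not} first invoke a $c_0$/$\ell_1$ spreading-model dichotomy and then upgrade each side separately. Instead it introduces a single combinatorial invariant, the index $\alpha_{<\omega^\xi}((x_i))$ (Definition~\ref{omegaindex}), which measures whether very fast growing $\mcs_{\xi_n}$-admissible families of $\alpha$-averages act nontrivially on tails of $(x_i)$. The dichotomy is then: index $>0$ yields an $\ell_1^{\omega^\xi}$ spreading model directly, by assembling the witnessing $\alpha$-averages into a Schreier functional (Proposition~\ref{cause index says so}(i)); index $=0$ yields an \emph{isometric} $c_0$ spreading model via a careful induction over the levels $W_m$ of the norming set (Proposition~\ref{cause index says so}(ii)). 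Your ``$\ell_1$-regime'' argument --- that merely evaluating an $\mcs_{\omega^\xi}$-admissible functional on $\sum_{i\in E}a_ix_i$ gives the $\ell_1^{\omega^\xi}$ lower bound --- is the step that fails as written: Schreier functionals in $W$ are sums of $\alpha$-averages, and there is no reason a generic $\alpha$-average evaluates well on an arbitrary block vector $x_i$. What makes the lower bound work is precisely the hypothesis that the index is positive, i.e.\ that such $\alpha$-averages \emph{do} act nontrivially. Citing the $c_0$/$\ell_1$ result from \cite{ABM-Illinois} does not circumvent this, since that result is itself proved through (the finite-order version of) the same index.

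For (i), your idea is close, but ``coordinate functionals'' should be norming functionals $f_j\in W$ for the block vectors $x_j$; the paper (deferring to \cite[Proposition~3.14]{ABM-Illinois}) forms $\alpha$-averages $\frac{1}{|E_i|}\sum_{j\in E_i}f_j$, observes these witness that the index of $(y_i)$ is positive, and then applies Proposition~\ref{cause index says so}(i). For (ii), your RIS/tree-analysis route is plausible in a mixed-Tsirelson space, but the paper takes a cleaner path: it builds $(\xi_{k+1},\xi_k,\varepsilon_k)$ special convex combinations $y_k$ of the $x_i$ with supports in $\mcs_{\omega^\xi}$ (Lemma~\ref{yes we can}), shows via Lemma~\ref{average on scc} and Corollary~\ref{i gotta sleep} that $\alpha_{<\omega^\xi}((y_k))=0$, and then invokes Proposition~\ref{cause index says so}(ii) again. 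The index thus does all the work in both directions; you should introduce it rather than attempt separate ad hoc arguments.
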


\begin{theorem} If $X$ is any block subspace of $\Xox$, then $AD(X)=\omega^\xi+1$.  \end{theorem}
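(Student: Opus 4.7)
The proof requires both directions. The lower bound $AD(X) \geqslant \omega^\xi + 1$ is nearly immediate from the preceding results. Applying Proposition \ref{properties of the space} to the basis of $X$ (itself a normalized block sequence of the basis of $\Xox$), after passing to a subsequence we obtain either an $\ell_1^{\omega^\xi}$ spreading model directly or a $(1+\varepsilon)$-$c_0^1$ spreading model; in the latter case part (i) furnishes an $\ell_1^{\omega^\xi}$ spreading model by $\mathcal{S}_1$-blocking. In either case $X$ contains an $\ell_1^{\omega^\xi}$ spreading model, so Proposition \ref{maincorollary} yields $D_{1+\varepsilon}(X) > \omega^\xi$ for every $\varepsilon > 0$, which precludes $\mathcal{S}_{\omega^\xi}$ arbitrary distortability of $X$; hence $AD(X) \geqslant \omega^\xi + 1$.

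For the upper bound $AD(X) \leqslant \omega^\xi + 1$, given $n \in \nn$ I would construct an equivalent norm $|\cdot|_n$ on $X$ that $n$-$\mathcal{S}_{\omega^\xi+1}$ distorts $X$, following the James-style blocking paradigm cited in the introduction and used in Section 5.1 for Tsirelson, Schlumprecht, and asymptotic $\ell_p$ spaces. The natural candidate is
\[ |z|_n = \max\Bigl\{\|z\|,\ \theta \cdot \sup \sum_{k=1}^M \|F_k z\|\Bigr\}, \]
where the supremum runs over successive $\mathcal{S}_{\omega^\xi}$-sets $(F_k)_{k=1}^M$ with $M \leqslant \min F_1$ (so that $\cup_k F_k \in \mathcal{S}_1[\mathcal{S}_{\omega^\xi}] = \mathcal{S}_{\omega^\xi+1}$) and $\theta \in (0,1)$ is a parameter chosen small enough (in terms of the $\ell_1^{\omega^\xi}$-spreading model constants in $\Xox$) to ensure that the two arguments of $\max$ stay comparable.

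To produce distortion witnesses in a normalized block sequence $(x_i)$ of $X$, I would first apply Proposition \ref{properties of the space} (and $\mathcal{S}_1$-blocking via (i) if needed) to extract an $\ell_1^{\omega^\xi}$ spreading model $(u_j)$, then apply Proposition \ref{properties of the space}(ii) to block $(u_j)$ into a $(1+\varepsilon)$-$c_0$ spreading model $(w_k)$ with each $\text{supp}_{(x_i)} w_k \in \mathcal{S}_{\omega^\xi}$. Take $y = w_1$ and $x = (\sum_{k=1}^M w_k)/\|\sum_{k=1}^M w_k\|$ with $M$ chosen large but $M \leqslant \min \text{supp}_{(x_i)} w_1$. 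The $(1+\varepsilon)$-$c_0$ estimate gives $\|\sum_{k=1}^M w_k\| \leqslant 1+\varepsilon$, and taking $F_k = \text{supp}_{(x_i)} w_k$ yields $|x|_n \geqslant \theta M/(1+\varepsilon)$; meanwhile, the $\ell_1^{\omega^\xi}$ upper estimate on $(u_j)$ bounds $\sum_k \|F_k w_1\|$ by a constant independent of the partition, so $|y|_n$ stays bounded by a constant of order $1$. Consequently $|x|_n/|y|_n$ exceeds $n$ for $M$ sufficiently large, and the common support of $x$ and $y$ lies in $\mathcal{S}_{\omega^\xi+1}$ as required.

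The main obstacle is verifying that $|\cdot|_n$ is actually equivalent to $\|\cdot\|$, i.e., bounding $\sup \sum_k \|F_k z\| \leqslant C\|z\|$ uniformly in $z \in X$ over $\mathcal{S}_{\omega^\xi+1}$-admissible families $(F_k)$. Such a bound is a structural statement about the norming set of $\Xox$ (reflecting the absence of $\ell_1^{\omega^\xi+1}$-type behavior in the norm), not a consequence of the spreading model information of Proposition \ref{properties of the space} alone. The final section of the paper is devoted precisely to establishing those structural facts about $\Xox$, which then feed into the distortion argument sketched above, along with a careful ordering of the choices of $\varepsilon$, $\theta$, and $M$.
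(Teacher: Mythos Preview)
Your lower bound is correct and matches the paper exactly.

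For the upper bound, however, your approach diverges from the paper's and the gap you identify is real and likely fatal as stated. The paper does \emph{not} use an $\mcs_{\omega^\xi+1}$-type norm. Instead it takes the much simpler
\[
|x|_n = \sup\Bigl\{\sum_{i=1}^n \|I_i x\| : I_1 < \cdots < I_n \text{ intervals}\Bigr\},
\]
for which equivalence is trivial: $\|\cdot\| \leqslant |\cdot|_n \leqslant n\|\cdot\|$ since there are only $n$ pieces. The distortion is then witnessed by producing, inside any block sequence $(x_i)$ and for any $k$, a set $E \in \mcs_{\omega^\xi+1}$ and blocks $(y_i)_{i=1}^k, (z_i)_{i=1}^n \subset [x_i]_{i\in E}$ which are $(1+\varepsilon)$-equivalent to the $\ell_1^k$ and $\ell_\infty^n$ bases respectively. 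Setting $\overline y = \sum y_i$ and $\overline z = \sum z_i$, one gets $|\overline z|_n \geqslant n$ while $|\overline y|_n \leqslant k + 2n$ and $\|\overline y\| \geqslant k/(1+\varepsilon)$, so $|z|_n/|y|_n \geqslant \frac{n}{(1+\varepsilon)^2}\cdot\frac{k}{k+2n}$, which is arbitrarily close to $n$ as $k \to \infty$. The $\mcs_{\omega^\xi+1}$ complexity enters only in locating the witnesses, not in the definition of the norm.

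Your proposed norm, by contrast, allows an unbounded number of pieces, and the very $c_0$-spreading-model vectors you use to produce large $|\cdot|_n$ show that $\sup_{(F_k)} \sum_k \|F_k z\|/\|z\|$ is unbounded over $z$; no choice of $\theta$ repairs this. The structural facts in the final section of the paper do not establish such a bound (nor could they). There is also a conflation in your witness argument between supports relative to $(x_i)$ and supports relative to the standard basis of $\Xox$: your $F_k = \text{supp}_{(x_i)} w_k$ cannot be plugged into a norm defined via coordinate projections on $\Xox$, and your claimed bound on $|w_1|_n$ via ``the $\ell_1^{\omega^\xi}$ upper estimate on $(u_j)$'' does not control $\sum_k \|F_k w_1\|$ for arbitrary $\mcs_{\omega^\xi+1}$-admissible partitions of the ambient support. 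The fix is not to patch your norm but to abandon it for the paper's $n$-interval norm.
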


\begin{proof}

$AD(X)>\omega^\xi$ follows from Proposition \ref{properties of the space} and Proposition \ref{maincorollary}, since the block subspaces of $\Xox$ each contain block sequences which are $\ell_1^{\omega^\xi}$ spreading models.    

For $n\in \nn$, define $|\cdot|_n$ on $\Xox$ by $$|x|_n = \sup\Bigl\{\sum_{i=1}^n \|I_ix\|: I_1<\ldots<I_n, I_i \text{\ an interval}\Bigr\}.$$  Clearly $\|\cdot\|\leqslant |\cdot|_n \leqslant n\|\cdot\|$. We will show that for $\delta>0$, $|\cdot|_n$ $(n-\delta)$-$\mcs_{\omega^\xi+1}$ distorts $\Xox$, and therefore $(n-\delta)$-$\mcs_{\omega^\xi+1}$ distorts any block subspace as well.  

We claim that for any block sequence $(x_i)\subset \Xox$, any $\varepsilon>0$, and any $k\in \nn$, there  exist $E\in \mcs_{\omega^\xi+1}$ and block sequences $(y_i)_{i=1}^k$, $(z_i)_{i=1}^n\subset [x_i]_{i\in E}$ so that $(y_i)_{i=1}^k, (z_i)_{i=1}^n$ are $(1+\varepsilon)$-equivalent to the $\ell_1^k$ and $\ell_\infty^n$ bases, respectively.  First we will show how this claim finishes the proof, and then we will return to the claim. We can assume that $\|y_i\|\leqslant 1$ and $\|z_i\|\geqslant 1$ for all $i$.  Suppose we have found the indicated $(y_i)_{i=1}^k, (z_i)_{i=1}^n$.  Let $\overline{y}=\sum_{i=1}^k y_i, y=\overline{y}/\|\overline{y}\|$, $ \overline{z}=\sum_{i=1}^n z_i, z=\overline{z}/\|\overline{z}\|\in [x_i]_{i\in E}$.  Then $$\|\overline{z}\|\leqslant 1+\varepsilon, \text{\ \ and\ \ }|\overline{z}|_n\geqslant \sum_{i=1}^n \|z_i\| \geqslant n,$$ 
$$\|\overline{y}\|\geqslant k/(1+\varepsilon), \text{\ \ and\ \ }|\overline{y}|_n\leqslant (k+2n).$$ Therefore $$|z|_n/|y|_n \geqslant \frac{n}{(1+\varepsilon)^2}\frac{k}{k+2n}.$$  Since $\varepsilon$ and $k$ were arbitrary, this gives the conclusion.  

We return to the claim.  We assume $k\geqslant n$.  Let $(x_i)$ be a block sequence in $\Xox$.  By Proposition \ref{properties of the space}, we can choose $M\in [\nn]$ so that $(x_i)_{i\in M}$ is either a $(1+\varepsilon)$- $c_0^1$ spreading model or an $\ell_1^{\omega^\xi}$ spreading model.  

Suppose $(x_i)_{i\in M}$ is a $(1+\varepsilon)$- $c_0^1$ spreading model.  If $E\in \mcs_1\cap [M]^{<\omega}$ is any set with $|E|\geqslant n$, we can clearly find $(z_i)_{i=1}^n \subset [x_i]_{i\in E}$.  Choose $(n_i)=N\in [M]$ so that $\mcs_{\omega^\xi}[\mcs_1](N)\subset \mcs_{\omega^\xi}$.  By Proposition \ref{properties of the space}, we can find $E_1<E_2<\ldots$, $E_i\in \mcs_1$ so that if $u_i=\sum_{j\in E_i}x_{n_j}$, $(u_i)$ is an $\ell_1^{\omega^\xi}$ spreading model.  By Theorem \ref{goodconstants}, we can find $F_1<F_2<\ldots$, $F_i\in \mcs_{\omega^\xi}$ and non-zero scalars $(a_j)$ so that if $y_i=\sum_{j\in F_i}a_ju_j$, $(y_i)$ is a $(1+\varepsilon)$-$\ell_1^{\omega^\xi}$ spreading model.  Then $$\text{supp}_{(x_j)} y_i = N(\text{supp}_{(x_j)_{j\in N}} y_i) = N\Bigl(\bigcup_{\ell\in F_i} E_\ell\Bigr)\in \mcs_{\omega^\xi}[\mcs_1](N)\subset \mcs_{\omega^\xi}.$$  Choose $k\leqslant i_1<\ldots<i_k$ and let $E=\cup_{j=1}^k \text{supp}_{(x_\ell)}(y_{i_j})\in \mcs_{\omega^\xi+1}$.  Since $(y_{i_j})_{j=1}^k\subset [x_i]_{i\in E}$ and since $|E|\geqslant n$, this finishes the first case.  

Suppose $(x_i)_{i\in M}$ is an $\ell_1^{\omega^\xi}$ spreading model.  We choose according to Theorem \ref{goodconstants} some $F_1<F_2<\ldots$, $F_i\in \mcs_{\omega^\xi}$ and non-zero scalars $(a_j)$ so that if $y_i=\sum_{j\in F_i} a_jx_j$, $(y_i)\subset B_X$ is a $(1+\varepsilon)$-$\ell_1^{\omega^\xi}$ spreading model.  By Proposition \ref{properties of the space}, we choose $E_1<E_2<\ldots$, $E_i\in \mcs_{\omega^\xi}$ and scalars $(b_j)$ so that if $z_i=\sum_{j\in E_i} b_jx_j$, $(z_i)$ is a $(1+\varepsilon)$- $c_0^1$ spreading model.  Then we choose $i_1<\ldots <i_k$ and $m_1<\ldots <m_n$ so that $$n+k \leqslant F_{i_1}<\ldots <F_{i_k}< E_{m_1}<\ldots < E_{m_n}$$ and set $$E=F_{i_1}\cup\ldots \cup F_{i_k}\cup E_{m_1}\cup\ldots \cup E_{m_n}\in \mcs_{\omega^\xi+1}.$$  Then $(y_{i_j})_{j=1}^k, (z_{m_j})_{j=1}^n \subset [x_i]_{i\in E}$ are clearly the desired blocks.  
\end{proof}

We conclude this section by stating a few problems that are open to us.

\begin{problem}
Does there exist a space $X$ so that $AD(X)=1$?
\end{problem}

\begin{problem}
Construct an arbitrarily distortable space $X$ space admitting neither a $c_0$ nor an $\ell_1$ spreading model so that $AD(X)>1$. This would perhaps reveal a new method of achieving lower bounds for $AD(X)$, which would not involve these spreading models.  \label{our problem}
\end{problem}

\begin{problem} [Gowers]
Is $AD(\ell_2)>1$? \label{Gowers}
\end{problem}

\section{The spaces $(\Xox)_{\xi < \omega_1}$}

Below we define the space $\Xox$ for a countable  ordinal $\xi$.

\begin{notation} Let $G\subset c_{00}$. If a vector $\al_0 \in G$ is of
the form $\al_0 = \frac{1}{\ell}\sum_{q=1}^df_q$, for some $\ell \inn$, $f_1 <
\ldots < f_d \in G,  d\leqslant \ell$ and $2 \leqslant$, then
$\al_0$ will be called an $\al$-average of size $s(\al_0) = \ell$. Notice that the size is not uniquely defined, however, this will not cause a problem.

Let $k\inn$. A finite sequence $(\al_q)_{q=1}^d$ of
$\al$-averages in $G$ will be called $\mathcal{S}_{\omega^\xi}$ admissible if
$\al_1 <\ldots < \al_d$ and $\{\min\supp\al_q:
q=1,\ldots,d\}\in\mathcal{S}_{\omega^\xi}$.

A finite or infinite sequence $(\al_q)_q$ of $\al$-averages in $G$ will be called
{\it very fast growing} if $\al_1 < \al_2 <\ldots$, $s(\al_1) < s(\al_2)
<\cdots$ and $s(\al_q)
> \max\supp\al_{q-1}$ for $1<q$.

If a vector $g\in G$ is of the form $g = \sum_{q=1}^d \al_q$ for
an $\mathcal{S}_{\omega^\xi}$-admissible and very fast growing sequence
$(\al_q)_{q=1}^d\subset G$, then $g$ will be called a
\schr.
\end{notation}

\subsection*{The norming set} Inductively construct a set
$W\subset c_{00}$ in the following manner.
Set $W_0 = \{\substack{+\\[-2pt]-} e_n\}_{n\inn}$. Suppose that $W_0,\ldots,W_m$ have been constructed. Define:
\begin{equation*}
W_{m+1}^\alpha = \big\{ \frac{1}{\ell}\sum_{q=1}^df_q:\quad f_1
< \ldots < f_d \in W_m, \ell\geqslant 2, \ell\geqslant d\big\}
\end{equation*}
\begin{equation*}
W_{m+1}^S\! =\! \big\{ g = \sum_{q=1}^d \al_q:\;
\{\al_q\}_{q=1}^d\subset W_m\; \mcs_{\omega^\xi} \text{-admissible and very fast
growing}\big\}
\end{equation*}
Define $W_{m+1} = W_{m+1}^\alpha\cup W_{m+1}^S\cup W_m$ and $W =
\cup_{m=0}^\infty W_m$.

For $x\in c_{00}$ define $\|x\| = \sup\{f(x): f\in W\}$ and $\Xox =
\overline{(c_{00}(\mathbb{N}),\|\cdot\|)}$. Evidently, $\Xox$ has a
1-unconditional basis. Further properties will imply that $\Xox$ is reflexive.

One may also describe the norm on $\Xox$ with an implicit formula.
For $j\inn, j\geqslant 2, x\in\Xox$, set $\|x\|_j =
\sup\{\frac{1}{j}\sum_{q=1}^d\|E_qx\|\}$, where the supremum is
taken over all successive finite subsets of the naturals
$E_1<\cdots<E_d, d\leqslant j$. Then by using standard arguments
it is easy to see that
\begin{equation*}
\|x\| = \max\big\{\|x\|_0,\;
\sup\{\sum_{q=1}^d\|E_qx\|_{j_q}\}\big\}
\end{equation*}
where the supremum is taken over all $\mathcal{S}_{\omega^\xi}$ admissible
finite subsets of the naturals $E_1<\cdots<E_k$, such that $j_q
> \max E_{q-1}$, for $q>1$.

\begin{definition}
Let $( x_k )_{k \inn}$ be a block sequence in $\Xox$ and let $\xi_n \uparrow \omega^\xi$ be the ordinal sequence defining $\mathcal{S}_{\omega^\xi}$.

We write 
$\alpha_{<\omega^\xi}((x_i)_{i \inn})=0$ 
if for any $n \inn$, any fast growing sequence $(\alpha_q)_{q \inn}$
of $\alpha$-averages in $W$ and for any $(F_k)_{k \inn}$ increasing sequence of subsets of $\nn$, such
that $(\alpha_q)_{q \in F_k}$ is $\mathcal{S}_{\xi_n}$ admissible, the following holds: For any subsequence $(x_{n_k})_{k \inn}$
of $(x_k)_{k \inn}$ we have $\lim_{k}\sum_{q \in F_k} |\alpha_q (x_{n_k})|=0$. If this is not the case, we write $\alpha_{<\omega^\xi}((x_i)_{i \inn})>0$. \label{omegaindex}
\end{definition}

The above index is used to detect when a given block sequence will admit an $\ell_1^{\omega^\xi}$ spreading model or a $c_0^1$ spreading model. We will need the following characterization. 

\begin{proposition}
Let $\xi$ be a countable limit ordinal and $(x_k)_{k \inn}$ be a block sequence in $\Xox$. The following are equivalent.
\begin{itemize}
\item[(i)] $\alpha_{<\omega^\xi} ((x_k)_{k \inn}) =0$
\item[(ii)] For any $\e >0$ and $n\inn$ there exist $j_0, k_0 \inn$ such that for
any $k \geqslant k_0$, and for any $(\alpha_q)_{q =1}^d$ $\mathcal{S}_{\xi_n}$-admissible and very fast growing sequence
of $\alpha$-averages such that $s(\alpha_q)>j_0$ for $q = 1, \ldots , d$, we have that $\sum_{q=1}^d |\alpha_q(x_k)| <\e$.
\end{itemize}
\label{higherorderindex}
\end{proposition}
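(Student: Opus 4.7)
The plan is to prove each implication separately, using $(ii)\Rightarrow(i)$ directly and $(i)\Rightarrow(ii)$ by contraposition, interpreting ``increasing sequence of subsets'' as a successive sequence $F_1<F_2<\cdots$ in the sense of Section 2.

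For $(ii)\Rightarrow(i)$, I would fix $n\inn$, a very fast growing $(\alpha_q)_{q\inn}\subset W$, a successive sequence $(F_k)$ with each $(\alpha_q)_{q\in F_k}$ being $\mcs_{\xi_n}$-admissible, and an arbitrary subsequence $(x_{n_k})$. Given $\ee>0$, let $j_0,k_0$ be as in (ii). Since $s(\alpha_q)$ is a strictly increasing sequence of integers it tends to $\infty$, and since $\min F_k\to\infty$, for all sufficiently large $k$ every $q\in F_k$ satisfies $s(\alpha_q)>j_0$, while $n_k\geqslant k_0$. Any sub-sequence of a very fast growing sequence of $\alpha$-averages is again very fast growing, so $(\alpha_q)_{q\in F_k}$ satisfies the hypotheses of (ii) applied to $x_{n_k}$, giving $\sum_{q\in F_k}|\alpha_q(x_{n_k})|<\ee$.

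For $(i)\Rightarrow(ii)$, assume (ii) fails with witnesses $\ee>0$ and $n\inn$. I would inductively construct $k_1<k_2<\cdots$ and successive blocks $B^m=(\tilde{\alpha}^m_q)_{q\in A^m}$ of $\alpha$-averages so that each $B^m$ is $\mcs_{\xi_n}$-admissible and very fast growing, $\min\supp \tilde{\alpha}^m_q>M_{m-1}$ (where $M_{m-1}$ is the maximum support of $B^{m-1}$, with $M_0:=0$), each $s(\tilde{\alpha}^m_q)$ exceeds both $M_{m-1}$ and $s_{m-1}:=\max_{q\in A^{m-1}}s(\tilde{\alpha}^{m-1}_q)$, and $\sum_{q\in A^m}|\tilde{\alpha}^m_q(x_{k_m})|\geqslant \ee$. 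At step $m$, choose $k_0\geqslant k_{m-1}+1$ so that $\supp x_{k_0}>M_{m-1}$ (possible because $(x_k)$ is a block sequence) and $j_0>\max(M_{m-1},s_{m-1})$. The failure of (ii) gives $k_m\geqslant k_0$ and an $\mcs_{\xi_n}$-admissible, very fast growing $(\alpha^m_q)_{q=1}^{d_m}$ with $s(\alpha^m_q)>j_0$ and $\sum_q|\alpha^m_q(x_{k_m})|\geqslant \ee$. The concatenation of the $B^m$ then yields a single very fast growing sequence $(\beta_p)$ together with successive sets $F_m$ such that $(\beta_p)_{p\in F_m}$ is $\mcs_{\xi_n}$-admissible and $\sum_{p\in F_m}|\beta_p(x_{k_m})|\geqslant\ee$, giving $\alpha_{<\omega^\xi}((x_k))>0$ against (i).

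The main obstacle is a \emph{tail-truncation} step required to splice consecutive blocks into a globally very fast growing sequence without losing the lower bound $\ee$. For each $\alpha^m_q=\frac{1}{\ell}\sum_{p=1}^{d'}g_p$ obtained above, I would replace it by $\tilde{\alpha}^m_q=\frac{1}{\ell}\sum_{p\,:\,g_p>M_{m-1}} g_p$, which is again an $\alpha$-average in $W$ with the same size $\ell$ since the cardinality constraint $|\{p:g_p>M_{m-1}\}|\leqslant d'\leqslant\ell$ is preserved when dropping summands. Let $A^m$ consist of the $q$ for which $\tilde{\alpha}^m_q\neq 0$. Because $\supp x_{k_m}>M_{m-1}$, the discarded summands annihilate $x_{k_m}$, so $\tilde{\alpha}^m_q(x_{k_m})=\alpha^m_q(x_{k_m})$ for $q\in A^m$ and $\alpha^m_q(x_{k_m})=0$ otherwise, preserving the bound $\sum_{q\in A^m}|\tilde{\alpha}^m_q(x_{k_m})|\geqslant\ee$. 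The min-support sequence of $B^m$ is a spread of the original $\mcs_{\xi_n}$-admissible min-support sequence, so by the spreading property $B^m$ remains $\mcs_{\xi_n}$-admissible, and the conditions $\min\supp \tilde{\alpha}^m_{\min A^m}>M_{m-1}$ and $s(\tilde{\alpha}^m_{\min A^m})>j_0>\max(M_{m-1},s_{m-1})$ ensure the concatenation $(\beta_p)$ is very fast growing across the block boundaries. Once these verifications are in place the proof is complete.
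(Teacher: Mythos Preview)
The paper states this proposition without proof, so there is no argument to compare against directly; your proof is correct and is the natural one. The $(ii)\Rightarrow(i)$ direction is routine once you observe that a sub-block of a very fast growing sequence is again very fast growing. For $\neg(ii)\Rightarrow\neg(i)$, the tail-truncation step is the only nontrivial point, and you handle it properly: restricting each $\alpha^m_q=\frac{1}{\ell}\sum_p g_p$ to those $g_p$ with $g_p>M_{m-1}$ yields an $\alpha$-average in $W$ of the same size (or zero), the action on $x_{k_m}$ is unchanged since $\supp x_{k_m}>M_{m-1}$, and the truncated min-support set $(\min\supp\tilde\alpha^m_q)_{q\in A^m}$ is a spread of a subset of the original $\mcs_{\xi_n}$ set, hence still in $\mcs_{\xi_n}$ by heredity and spreading. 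One small point worth making explicit: the truncated averages remain \emph{successive} (since $\supp\tilde\alpha^m_q\subset\supp\alpha^m_q$), which is needed both for the min-supports to be strictly increasing and for the within-block very fast growing condition to persist after passing to the subset $A^m$. With that noted, the concatenation argument goes through.
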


\begin{proposition}\label{cause index says so}
Let $( x_i )_{i \inn}$ be normalized block sequence in $\Xox$. Then the following hold:
\begin{itemize}
\item[(i)] If $\alpha_{<\omega^\xi}((x_i)_{i \inn}) >0$ then, by passing to a subsequence, $( x_i )_{i \inn }$
generates an $\ell_1^{\omega^\xi}$ spreading model.
\item[(ii)] If $\alpha_{<\omega^\xi}((x_i)_{i \inn}) =0$ then there is a subsequence of $(x_i)$ that generates an isometric $c_0$ spreading model.
\end{itemize}
\end{proposition}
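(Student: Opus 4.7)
My plan is to unpack the hypothesis $\alpha_{<\omega^\xi}((x_i))>0$ to obtain (after a first thinning of $(x_i)$) a constant $\delta>0$, an integer $n$, a very fast growing sequence $(\alpha_q)_q\subset W$ of $\alpha$-averages, successive finite sets $(F_k)_k$ with each $(\alpha_q)_{q\in F_k}$ being $\mathcal{S}_{\xi_n}$-admissible, and a subsequence $(x_{n_k})$ satisfying $\sum_{q\in F_k}|\alpha_q(x_{n_k})|\geqslant \delta$ for every $k$. Using the finiteness of $\text{supp}(x_{n_k})$ and the rapid growth of the sizes of the $\alpha_q$, I would pass to a further subsequence and correspondingly replace each $\alpha_q$ (for $q\in F_k$) by its restriction to $\text{supp}(x_{n_k})$; this preserves the $\alpha$-average structure (the restriction only deletes some of the $f_j$ summands) and the size $s(\alpha_q)$, so the resulting functionals still lie in $W$.

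To produce the $\ell_1^{\omega^\xi}$ estimate, fix $E\in\mathcal{S}_{\omega^\xi}$ with $\min E\geqslant n$, so that $E\in\mathcal{S}_{\xi_m}$ for some $m\leqslant\min E$. Applying Proposition \ref{Schreier facts}(ii) up front to the indexing sequence $(n_k)$, the concatenated min-support set $\bigcup_{k\in E}\{\min\text{supp}(\alpha_q):q\in F_k\}$ lies in $\mathcal{S}_{\xi_m}[\mathcal{S}_{\xi_n}]\subset\mathcal{S}_{\xi_n+\xi_m}$, which sits inside $\mathcal{S}_{\omega^\xi}$ once the minimum is large enough (using additive indecomposability of $\omega^\xi$ together with Proposition \ref{Schreier facts}(i)). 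Choosing signs $\varepsilon_{q,k}$ with $\varepsilon_{q,k}a_k\alpha_q(x_{n_k})=|a_k||\alpha_q(x_{n_k})|$, $1$-unconditionality of the basis places the signed $\alpha$-averages back in $W$, and their concatenation is a Schreier functional $g\in W$ satisfying $g\bigl(\sum_{k\in E}a_kx_{n_k}\bigr)\geqslant \delta\sum_{k\in E}|a_k|$. Combined with the trivial upper estimate, this is the desired $\ell_1^{\omega^\xi}$ spreading model. The main obstacle is coordinating the support-alignment with the Schreier admissibility computation, both of which must be arranged in the initial extraction of $(n_k)$ and $(F_k)$.

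\textbf{Part (ii).} I would exploit the characterization of Proposition \ref{higherorderindex}(ii). Given $\varepsilon>0$ and $\ell\in\mathbb{N}$, the goal is to find a tail of the subsequence so that $\|\sum_{i=1}^\ell a_ix_{n_{k_i}}\|\leqslant (1+\varepsilon)\max_i|a_i|$ for all sufficiently spread choices of $k_1<\cdots<k_\ell$; a diagonal extraction then yields the isometric $c_0$ spreading model (the lower bound is the trivial $1$-unconditional one). Given a test functional $f\in W$ evaluated at $y=\sum a_ix_{n_{k_i}}$, I would analyze its inductive construction tree in $W$, categorizing each $\alpha$-average $\alpha=\frac{1}{s}\sum_j f_j$ appearing in $f$ by its size $s$. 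Using the index characterization, the contribution of $\alpha$-averages of size exceeding the threshold $j_0(\varepsilon,n)$ is bounded by $\varepsilon$. The $\alpha$-averages of size $s\leqslant j_0$ are constrained in number by $\mathcal{S}_{\xi_n}$-admissibility at the relevant Schreier level; by choosing the subsequence $(x_{n_{k_i}})$ with supports having gaps large compared to $j_0$, each such small-size $\alpha$-average touches the support of at most one $x_{n_{k_i}}$, so its contribution is at most $|a_i|$, and $1$-unconditionality prevents these contributions from combining to more than $\max_i|a_i|$.

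The main obstacle is the bookkeeping inside the recursive structure of $W$: one must verify that every $\mathcal{S}_{\omega^\xi}$-admissible very fast growing family of $\alpha$-averages within the decomposition of $f$ splits cleanly into "small size" and "large size" parts, each handled as above, and that the required thinning of $(x_{n_k})$ can be arranged diagonally over the parameters $\ell$ and $\varepsilon$ to yield a single subsequence that is the desired isometric $c_0$ spreading model.
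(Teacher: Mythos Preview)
Your Part (i) is essentially the paper's argument: extract the witnessing data from the definition of $\alpha_{<\omega^\xi}>0$, restrict the $\alpha_q$'s to the relevant supports, pass to a subsequence via Proposition \ref{Schreier facts}(ii), and glue the blocks of $\alpha$-averages into a single Schreier functional in $W$. One small correction: you should apply Proposition \ref{Schreier facts}(ii) directly with the pair $(\omega^\xi,\xi_n)$ to get a single $L$ with $\mathcal{S}_{\omega^\xi}(L)[\mathcal{S}_{\xi_n}]\subset\mathcal{S}_{\xi_n+\omega^\xi}=\mathcal{S}_{\omega^\xi}$, rather than routing through $\mathcal{S}_{\xi_m}[\mathcal{S}_{\xi_n}]\subset\mathcal{S}_{\xi_n+\xi_m}$ with $m$ depending on $E$; otherwise the threshold ``once the minimum is large enough'' depends on $m$ and cannot be arranged uniformly in a single extraction.

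Your Part (ii), however, has a genuine gap. The size $s(\alpha)$ of an $\alpha$-average $\alpha=\frac{1}{\ell}\sum_{q=1}^d f_q$ is the integer $\ell$; it says nothing about the width of $\supp\alpha$, since each $f_q$ may itself be a Schreier functional with arbitrarily large support. Hence the assertion that ``each small-size $\alpha$-average touches the support of at most one $x_{n_{k_i}}$'' is false no matter how widely spaced the $x_{n_{k_i}}$ are, and the follow-up claim that $1$-unconditionality caps the combined contribution at $\max_i|a_i|$ is also incorrect (a Schreier functional sums its $\alpha$-averages with no damping factor). A single-level size dichotomy cannot control a functional in $W$, because $\alpha$-averages and Schreier functionals are nested arbitrarily deeply.

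The paper's proof of (ii) proceeds instead by induction on the complexity level $m$ of $W_m$, proving two interlocking estimates simultaneously: $|\alpha_0(\sum_{j\in F}x_{i_j})|<1+2\varepsilon_{i_{\min F}}$ for every $\alpha$-average $\alpha_0\in W_m$ and $|g(\sum_{j\in F}x_{i_j})|<1+3\varepsilon_{i_{\min F}}$ for every Schreier functional $g\in W_m$, where $(\varepsilon_i)$ is summable with $\varepsilon_i>3\sum_{j>i}\varepsilon_j$. For an $\alpha$-average $\alpha_0=\frac{1}{\ell}\sum f_q$, one splits the $f_q$'s into those meeting at most one $x_{i_j}$ (each contributing $\leqslant 1$) and those meeting several (each bounded via the inductive hypothesis); the factor $1/\ell$ then yields the bound. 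For a Schreier functional $g=\sum_q\alpha_q$, the very fast growing condition forces all $\alpha_q$ with $q$ beyond a pivot $q_0$ to have large size, so Proposition \ref{higherorderindex} controls them; the term $\alpha_{q_0}$ is handled by the inductive hypothesis, and the remaining terms act only on $x_{i_1}$. Your size-dichotomy intuition corresponds to one ingredient of the Schreier step, but without the induction on $m$ and the companion estimate for $\alpha$-averages the argument does not close.
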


\begin{proof} 
First we prove (i). By Definition \ref{omegaindex} there is an $d \inn$, $\ee >0$, a very fast growing
sequence of $\alpha$-averages $(\alpha_q)_{q \inn}$ in $W$, and sequence $(F_i)_{i \inn}$
of successive finite subsets such that $(\alpha_q)_{q \in F_i}$ is $\mathcal{S}_{\xi_d}$ admissible and a subsequence of $(x_i)_i$, again denoted by $(x_i)_i$, so that for each $i \inn$ and
 $$\sum_{q \in F_i} |\alpha_q (x_i)|>\e.$$
Since the basis is unconditional, we may assume that $\supp \alpha_q \subset \supp x_i$ for all $q\in F_i$ and $i\inn$. 
Using the Proposition \ref{Schreier facts} (ii) we may find a subsequence $M = (m_i)_i$ so that for each $F \in \mathcal{S}_{\omega^\xi}(M)$ we have $\cup_{i \in F} F_i \in \mathcal{S}_{\xi_d +\omega^\xi} = S_{\omega^\xi}$ (note that $\xi_d +\omega^\xi = \omega^\xi$).

Let $G\in\mathcal{S}_{\omega^\xi}$ and $(\lambda_i)_{i\in G}$ be real numbers. Then $M(G) \in \mathcal{S}_{\omega^\xi}(M)$. Thus $\cup_{i \in M(G)} F_i =\cup_{i \in G} F_{m_i}  \in \mathcal{S}_{\omega^\xi}$. Since $q \leqslant \min \supp \alpha_q$ we have that $\cup_{i\in G}\{\min\supp \alpha_q: q\in F_{m_i}\}\in\mathcal{S}_{\omega^\xi}$ and therefore the sequence $\{\alpha_q :q\in \cup_{i\in G}F_{m_i}\}$ is $\mathcal{S}_{\omega^\xi}$ admissible and very fast growing.

We conclude that the functional $g = \sum_{i\in G}\sgn (\lambda_i)\sum_{q\in F_{m_i}}\alpha_q$ is in the norming set $W$ and hence:
$$\left\|\sum_{i\in G}\la_ix_{m_i}\right\| \geqslant g\left( \sum_{i\in G}\la_ix_{m_i} \right) > \e \sum_{i\in G}|\lambda_i|.$$
$G\in\mathcal{S}_{\omega^\xi}$ and $(\lambda_i)_{i\in G}$ were arbitrary, we conclude that $(x_{m_i})_i$ admits an $\ell_1^{\omega^\xi}$ spreading model.

We now prove (ii).
Let $(\e_i)_{i \inn}$ be a summable sequence of positive reals such that $\e_i>3 \sum_{j>i} \e_j$
for all $i \inn$. Using Proposition \ref{higherorderindex}, inductively choose a subsequence, again denoted
by $(x_i)_{i \inn}$, such that for $i_0 \geqslant 2$ and $j_0=\max\supp x_{i_0-1}$ if $(\alpha_q)_{q=1}^\ell$ is $\mathcal{S}_{\xi_{j_0}}$-
admissible $s(\alpha_1) \geqslant \min\supp x_{i_0}$ then for all $i \geqslant i_0$
\begin{equation}
\sum_{q=1}^\ell |\alpha_q (x_i)| < \frac{\e_{i_0}}{i_0 }. \label{small application}
\end{equation}
We will show that for any $t \leqslant i_1 < \ldots < i_t$, $F \subset\{1, \ldots , t\}$ we have
$$|\alpha_0(\sum_{j \in F} x_{i_j})|< 1+ 2 \e_{i_{\min F}}.$$
whenever $\alpha_0$ is an $\alpha$-average and
$$|g(\sum_{j \in F} x_{i_j})|< 1+ 3 \e_{i_{\min F}}.$$
whenever $g$ is Schreier functional. This implies that item (ii) holds.

For functionals in $W_0$ the above is clearly true. Assume for some $m \geqslant 0$ that above holds for any
$t \leqslant i_1 < \ldots < i_t$ and any functional in $W_m$. In the first case, let  $t \leqslant i_1 < \ldots < i_t$
an $\alpha_0 = \frac{1}{\ell} \sum_{q=1}^d f_q$ with $d \leqslant \ell$ and $\ell \geqslant 2$ be an $\alpha$-average in $W_{m+1}$. 

Set
$$E_1 = \{ q : \mbox{ there exists at most one $j \leqslant  t$ such that } \ran f_q \cap \ran x_{i_j} \not= \emptyset \},$$
and $E_2 = \{1, \ldots , \ell\} \setminus E_1$.  For $q \in E_1$,
we have $|f_q(\sum_{j=1}^n x_{i_j})| \leqslant  1$. Therefore $\sum_{q
\in E_1} | f_q (\sum_{j=1}^n x_{i_j})| \leqslant  \# E_1$.

For  $q $ in $E_2$, let $j_q \in \{1, \ldots, t\}$ be minimum such
that $\ran x_{i_{j_q}} \cap \ran f_q \not= \emptyset$. If $q < q'$
are in $E_2$, $j_q < j_{q'}$.  By the inductive assumption

\begin{equation}
\begin{split}
\sum_{q \in E_2} | f_q ( \sum_{j=1}^t x_{i_j})| &< \sum_{q \in E_2} (1 + 3\e_{i_{j_q}}) \\
& < \# E_2 + 3\e_{i_1} + 3\sum_{j>1} \e_{i_j} < \# E_2 + 4
\e_{i_1}.
\end{split}
\end{equation}

\noindent Therefore
$$|\al_0( \sum_{j=1}^t x_{i_j})|= |\frac{1}{\ell} \sum_{q=1}^d f_q ( \sum_{j=1}^t x_{i_j})|< \frac{\#E_1 +\#E_2 +4 \e_{i_1}}{\ell}\leqslant \frac{d +4 \e_{i_1}}{\ell} \leqslant 1+2\e_{i_1}.$$
The last inequality follows from the fact that $\ell \geqslant 2$.

Let $g \in W_{m+1}$ such that $g= \sum_{q=1}^d \alpha_q$ is a Schreier functional. We
assume without loss of generality that
\begin{equation}
\mbox{ran} g \cap \mbox{ran} x_{i_j} \neq \varnothing \mbox{ for all } j = 1, \ldots t.
\label{wlog}
\end{equation}
Set
\begin{equation*}
q_0 = \min\{q: \max \supp \alpha_q \geqslant \min\supp x_{i_{2}}\}.
\end{equation*}

By definition of $\mathcal{S}_{\omega^\xi}$, $(\alpha_q)_{q=1}^d$ is $S_{\xi_{\min\supp \alpha_1}}$-admissible.
Also, by definition, for $q>q_0$
$$s(\alpha_q) > \max\supp \alpha_{q_0} \geqslant \min \supp x_{i_2}.$$
Using (\ref{wlog})
$$ \min\supp \alpha_1 \leqslant \max \supp x_{i_1}.$$
These facts together allow us to use our initial assumption on the sequence $(x_i)_{i \inn}$ (for $i_0=i_2$) and conclude
that for $j \geqslant 2$
 
 \begin{equation}
 \sum_{q>q_0} |\alpha_q (x_{i_j})| < \frac{\e_{i_2}}{i_2}.
 \end{equation}

Using the fact that $i_2 \geqslant t$, it follows that
\begin{equation}
\label{after q0} \sum_{q>q_0} |\alpha_q (\sum_{j=2}^t x_{i_j})| < \e_{i_{1}}.
\end{equation}

The rest of the proof is separated into two cases.

{\it Case 1:} Assume first that for $q< q_0$, $\alpha_q( \sum_{j=1}^t x_{i_j})=0$. In this case we apply the induction hypothesis
for $\alpha_{q_0}$ and (\ref{after q0}) to get:
$$g(\sum_{j=1}^t x_{i_j})= \alpha_{q_0} (\sum_{j=1}^t x_{i_j}) + \e_{i_{1}} < 1 + 3\e_{i_{1}}.$$ 

{\it Case 2:} Alternatively, if $\alpha_q( \sum_{j=1}^t x_{i_j})\not=0$ for some $q<q_0$ the very fast growing assumption on $(\al_{q})_{q=1}^d$ yields that $s(\alpha_{q_0}) > \min \supp x_{i_{1}}$. 

In this case, since the singleton $\alpha_{q_0}$ is $\mathcal{S}_0$ admissible, we can use (\ref{small application}) to conclude that 
\begin{equation}
|\alpha_{q_0}( \sum_{j=1}^t x_{i_j})| <  \frac{t \e_{i_1}}{i_1} \leqslant \e_{i_1}. \label{it is small}
\end{equation}
In the above we used that $t \leqslant i_1$.
Therefore combining (\ref{it is small}) and (\ref{after q0}) as before we have:
\begin{equation}
|g(\sum_{j=1}^t x_{i_j})|= \sum_{q<q_0} |\alpha_q ( x_{i_1})| + |\alpha_{q_0} (\sum_{j=1}^t x_{i_j})| + \sum_{q>q_0} |\alpha_q (\sum_{j=2}^t x_{i_j})| \leqslant 1+2\e_{i_1}.
\end{equation}
The proposition is now proved.
\end{proof}

\begin{definition}
Let $x = \sum_{i\in F}c_ie_i$ be a vector in $c_{00}(\mathbb{N})$, $\zeta < \xi$ be countable ordinal numbers and $\e>0$. If:
\begin{itemize}

\item[(i)] the coefficients $(c_i)_{i\in F}$ are  non-negative and $\sum_{i\in F} c_i = 1$,

\item[(ii)] the set $F$ is in $\mathcal{S}_\xi$ and

\item[(iii)] for every $G\in \mathcal{S}_\zeta$ we have that $\sum_{i\in G\cap F}c_i  < \e$,

\end{itemize}
then we say that the vector $x$ is a $(\xi,\zeta,\e)$ basic special convex combination (or basic s.c.c.).
\end{definition}

The proof of the next proposition can be found in \cite[Chapter 13, Proposition 12.9]{ATMemoirs}.
\begin{proposition}\label{hey such stuff really exists}
For all countable ordinal numbers $\zeta<\xi$, positive real number $\e$ and infinite subset of the natural numbers $M$, there exists $F\subset M$ and non-negative real numbers $(c_i)_{i\in F}$ such that the vector $x = \sum_{i\in F}c_ie_i$ is a  $(\xi,\zeta,\e)$ basic s.c.c.
\end{proposition}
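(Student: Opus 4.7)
The plan is to proceed by transfinite induction on $\xi$, establishing at each stage that for every $\zeta < \xi$, every $\varepsilon > 0$, and every infinite $M \subseteq \nn$ there exists a $(\xi,\zeta,\varepsilon)$ basic s.c.c.\ supported in $M$. The base case $\xi = 1$ forces $\zeta = 0$: I would fix an integer $n > 1/\varepsilon$ and select $F \subseteq M$ with $|F| = n$ and $\min F \geqslant n$, then assign $c_i = 1/n$. Then $|F| = n \leqslant \min F$ places $F$ in $\mathcal{S}_1$, and since any $G \in \mathcal{S}_0$ has $|G| \leqslant 1$, the bound $\sum_{i \in G \cap F} c_i \leqslant 1/n < \varepsilon$ is immediate.

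For the limit case, let $\xi$ be a countable limit ordinal with defining sequence $\xi_n \uparrow \xi$, and assume the inductive hypothesis for all $\eta < \xi$. Given $\zeta < \xi$, $\varepsilon > 0$, and $M$, I would fix $n \leqslant M$ with $\xi_n > \zeta$ and apply the inductive hypothesis to the pair $(\xi_n, \zeta)$ with infinite set $M \cap [n, \infty)$, obtaining a $(\xi_n, \zeta, \varepsilon)$ basic s.c.c.\ $x = \sum_{i \in F} c_i e_i$ with $\min F \geqslant n$. The definition of $\mathcal{S}_\xi$ for limit $\xi$ then gives $F \in \mathcal{S}_\xi$ directly, and the remaining two conditions are inherited.

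The successor step $\xi = \eta + 1$ is where the main content lies. Given $\zeta \leqslant \eta$ and $\varepsilon > 0$, I would fix $n > 2/\varepsilon$ and construct successively, by the inductive hypothesis, basic s.c.c.'s $x^{(j)} = \sum_{i \in F^{(j)}} c_i^{(j)} e_i$ for $j = 1, \ldots, n$ with $n \leqslant F^{(1)} < F^{(2)} < \cdots < F^{(n)}$ in $M$. Averaging to obtain $x = (1/n)\sum_j x^{(j)}$ makes $F = \bigcup_j F^{(j)}$ lie in $\mathcal{S}_1[\mathcal{S}_\eta] = \mathcal{S}_{\eta+1}$ with $\sum_i c_i = 1$, so (i) and (ii) hold. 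When $\zeta < \eta$, I would take each $x^{(j)}$ to be a $(\eta, \zeta, \varepsilon/2)$ basic s.c.c.; then for any $G \in \mathcal{S}_\zeta$ the set $G \cap F^{(j)}$ lies in $\mathcal{S}_\zeta$ by hereditariness, so $\sum_{i \in G \cap F^{(j)}} c_i^{(j)} < \varepsilon/2$ for each $j$, and summing and dividing by $n$ yields $\sum_{i \in G \cap F} c_i < \varepsilon/2 < \varepsilon$.

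The remaining subcase $\zeta = \eta$ is the main obstacle, since the inductive hypothesis at level $\eta$ only provides control against $\mathcal{S}_{\zeta'}$-subsets for $\zeta' < \eta$. The approach is to decompose an arbitrary $G \in \mathcal{S}_\eta$ via the recursive structure of Schreier families: for successor $\eta = \eta' + 1$ one writes $G = \bigcup_{i=1}^r G_i$ with $G_i \in \mathcal{S}_{\eta'}$ and $r \leqslant \min G$, and for limit $\eta$ with $\eta_m \uparrow \eta$ one has $G \in \mathcal{S}_{\eta_m}$ for some $m \leqslant \min G$. In either case the pieces lie in a family $\mathcal{S}_{\zeta'}$ with $\zeta' < \eta$, so I would build each $x^{(j)}$ as an $(\eta, \zeta', \delta)$ basic s.c.c.\ with $\delta$ tuned small relative to $\min F^{(1)}$, which controls the number of sub-$\eta$ pieces of any $G$ with $G \cap F \neq \varnothing$. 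Then $\sum_{i \in G \cap F^{(j)}} c_i^{(j)}$ is bounded by $r\delta$, and jointly calibrating $n$, $\delta$, and the starting position $\min F^{(1)}$ keeps the total $< \varepsilon$ after dividing by $n$. The combinatorial bookkeeping in this calibration, which mirrors the construction of repeated averages in the spirit of Argyros-Mercourakis-Tsarpalias, is where the real work lies.
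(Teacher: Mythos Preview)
The paper does not actually prove this proposition; it cites \cite[Chapter 13, Proposition 12.9]{ATMemoirs}, where the result is obtained via the repeated-averages hierarchy. Your transfinite-induction skeleton is exactly that framework, and the base case, the limit case, and the successor subcase $\zeta<\eta$ are all fine as written.

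The genuine gap is in the successor subcase $\zeta=\eta$. Your claim that ``$\min F^{(1)}$ \ldots\ controls the number of sub-$\eta$ pieces of any $G$ with $G\cap F\neq\varnothing$'' has the inequality backwards. If $\eta=\eta'+1$ and $G\in\mathcal{S}_\eta$ is written as $\bigcup_{l=1}^r G_l$ with $G_l\in\mathcal{S}_{\eta'}$, then $r\leqslant\min G$; assuming $G\subseteq F$ gives only $\min G\geqslant\min F^{(1)}$, a \emph{lower} bound on $r$. Since $\min G$ can be as large as $\max F$, the estimate $\sum_{i\in G}c_i<r\delta$ is useless for any single $\delta$ fixed before $F$ is built. (When $\eta$ is a limit the situation is worse still: the auxiliary ordinal $\zeta'=\eta_m$ depends on $G$ through $m\leqslant\min G$, so a single $\zeta'$ cannot be chosen in advance for all $x^{(j)}$.)

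The idea is salvageable, but only with a recursive choice rather than a single calibration. Let $j_0$ be the first index with $\min G\in F^{(j_0)}$; that block contributes at most $1/n$. For $j>j_0$ one has $r\leqslant\min G\leqslant\max F^{(j_0)}\leqslant\max F^{(j-1)}$, so if each $x^{(j)}$ is built \emph{after} $F^{(j-1)}$ as an $(\eta,\zeta_j',\delta_j)$ s.c.c.\ with $\zeta_j'$ and $\delta_j$ chosen relative to $\max F^{(j-1)}$, the tail is controlled. The argument in \cite{ATMemoirs} organises this more cleanly by working with the canonical repeated averages $\xi_\eta^M$ on maximal $\mathcal{S}_\eta$-sets and proving a uniform bound on $\sum_j\xi_\eta^{M_j}(G)$ for $G\in\mathcal{S}_\eta$ directly by induction on $\eta$, which avoids decomposing $G$ at all.
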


\begin{definition}
Let $(x_k)_{k=1}^m$ be a finite block sequence in $c_{00}(\mathbb{N})$, $\zeta < \xi$ be countable ordinal numbers and $\e>0$. Let $(c_k)_{k=1}^m$ be non-negative real numbers and set $\phi_k = \min\supp x_k$ for $k=1,\ldots,m$. If the vector $\sum_{k=1}^mc_ke_{\phi_k}$ is a $(\xi,\zeta,\e)$ basic s.c.c., then we shall say that the vector $x = \sum_{k=1}^mc_kx_k$ is a $(\xi,\zeta,\e)$ special convex combination (or s.c.c.).
\end{definition}

\begin{lemma}\label{yes we can}
Let $(x_i)_i$ be a block sequence in $c_{00}(\mathbb{N})$. Then for every $k\inn$ and $\e>0$ there exists $F\in\mathcal{S}_{\omega^\xi}$ and non-negative real numbers $(c_i)_{i\in F}$ such that the vector $y = \sum_{i\in F}c_ix_i$ is a $(\xi_{k+1},\xi_k,\e)$ s.c.c.
\end{lemma}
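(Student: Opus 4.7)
The plan is to reduce the lemma to Proposition \ref{hey such stuff really exists}, which produces basic s.c.c.'s inside any prescribed infinite subset of $\nn$, and then to transport the resulting basic s.c.c.\ across the evident bijection $i \leftrightarrow \phi_i := \min\supp x_i$ to yield an s.c.c.\ on the block sequence $(x_i)$.

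In detail, first I use Proposition \ref{Schreier facts}(i) (applicable because $\xi_{k+1} < \omega^\xi$) to fix an $n_0 \in \nn$ such that every $E \in \mcs_{\xi_{k+1}}$ with $n_0 \leqslant \min E$ automatically lies in $\mcs_{\omega^\xi}$. Since $(x_i)$ is a block sequence, the set $M = \{\phi_i : i \in \nn\} \cap [n_0,\infty)$ is an infinite subset of $\nn$. Apply Proposition \ref{hey such stuff really exists} with its ordinal parameters instantiated as $\xi_k < \xi_{k+1}$, with the given $\varepsilon$, and with infinite set $M$, to extract $F^* \subset M$ and non-negative reals $(c^*_\phi)_{\phi \in F^*}$ so that $\sum_{\phi \in F^*} c^*_\phi e_\phi$ is a $(\xi_{k+1},\xi_k,\varepsilon)$ basic s.c.c. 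In particular $F^* \in \mcs_{\xi_{k+1}}$; combined with $\min F^* \geqslant n_0$ and the choice of $n_0$, this gives $F^* \in \mcs_{\omega^\xi}$.

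To finish, I pull back along $\phi$: set $F = \{i \in \nn : \phi_i \in F^*\}$ and $c_i = c^*_{\phi_i}$ for $i \in F$. Then by construction $\sum_{i \in F} c_i e_{\phi_i} = \sum_{\phi \in F^*} c^*_\phi e_\phi$ is precisely the basic s.c.c.\ produced above, so by the definition of s.c.c.\ preceding the statement, $y = \sum_{i \in F} c_i x_i$ is a $(\xi_{k+1},\xi_k,\varepsilon)$ s.c.c. Under the convention that the Schreier admissibility of a block sequence is inherited from the min-supports of its members, the clause $F \in \mcs_{\omega^\xi}$ coincides with $F^* = \{\phi_i : i \in F\} \in \mcs_{\omega^\xi}$, already verified. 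The only real content of the argument is the invocation of Proposition \ref{hey such stuff really exists}; the one thing to be careful about is the index bookkeeping through $i \mapsto \phi_i$, which the tail truncation at $n_0$ handles cleanly.
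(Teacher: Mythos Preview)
The paper does not supply a proof of this lemma; it is stated without argument, presumably because it is considered an immediate consequence of Proposition~\ref{hey such stuff really exists}. Your proof is exactly the natural deduction from that proposition and is correct: apply Proposition~\ref{hey such stuff really exists} on the infinite set of min-supports $\{\phi_i\}$ (tail-truncated via Proposition~\ref{Schreier facts}(i) so that $\mathcal{S}_{\xi_{k+1}}$-sets land in $\mathcal{S}_{\omega^\xi}$), and then read the resulting basic s.c.c.\ as an s.c.c.\ on $(x_i)$ via the definition.

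The one point you flag---whether ``$F \in \mathcal{S}_{\omega^\xi}$'' refers to the index set or to $\{\phi_i : i \in F\}$---is a real ambiguity in the paper's statement. Your proof establishes $\{\phi_i : i \in F\} \in \mathcal{S}_{\omega^\xi}$, which is what is actually used downstream (the applications in Proposition~\ref{everything works the way it is supposed to} and ultimately Proposition~\ref{properties of the space}(ii) only ever require $\mathcal{S}_{\omega^\xi}$-admissibility of the block vectors, i.e., control on min-supports). Since $\phi_i \geqslant i$, the index set $F$ itself need not literally lie in $\mathcal{S}_{\omega^\xi}$ under your construction; if one insists on that reading, a trivial relabeling of the block sequence resolves it. You were right to note this and right not to lose sleep over it.
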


 The next two lemmas are is from \cite{ABM-Illinois} where they are labelled Lemma 3.3 and Lemma 3.4, respectively.  
 
\begin{lemma}\label{average on blocks}
Let $\al_0$ be an $\al$-average in $W$, $(x_k)_{k=1}^m$ be a
normalized block sequence and $(c_k)_{k=1}^m$ non
negative reals with $\sum_{k=1}^mc_k = 1$. Then if $G_{\alpha_0} = \{k:
\ran\alpha_0\cap\ran x_k\neq\varnothing\}$, the following holds:
\begin{equation*}
\left|\alpha_0\bigg(\sum_{k=1}^m c_k x_k\bigg)\right| < \frac{1}{s(\alpha_0)}\sum_{i\in G_\alpha}c_i + 2\max\{c_i: i\in
G_{\alpha_0}\}.
\end{equation*}
\end{lemma}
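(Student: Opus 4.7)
The plan is to exploit the fact that the functionals $f_q$ constituting $\alpha_0$ and the vectors $x_k$ form successive sequences, so their ranges are pairwise disjoint. Write $\alpha_0 = \tfrac{1}{\ell}\sum_{q=1}^d f_q$ with $f_1 < \cdots < f_d \in W$ and $d \leqslant \ell = s(\alpha_0)$, and for each $q$ set
$$
I_q = \{\,k \in G_{\alpha_0} : \ran f_q \cap \ran x_k \neq \varnothing\,\}.
$$
By the successiveness of both sequences, $I_q$ is an interval of consecutive integers; when nonempty, denote its endpoints by $k_q^-,k_q^+$ and set $I_q^{\mathrm{int}} = I_q \setminus \{k_q^-, k_q^+\}$ (empty when $|I_q|\leqslant 2$).

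The crux is the observation that if $k \in I_q^{\mathrm{int}}$, then $\ran x_k \subseteq \ran f_q$, and hence $k$ belongs to no other $I_{q'}$. Indeed, $k_q^- < k < k_q^+$ combined with the nonempty intersections $\ran f_q \cap \ran x_{k_q^\pm} \neq \varnothing$ yields
$$
\min f_q \leqslant \max x_{k_q^-} < \min x_k \leqslant \max x_k < \min x_{k_q^+} \leqslant \max f_q,
$$
so $\ran x_k \subseteq \ran f_q$; since the ranges of the $f_{q'}$ are pairwise disjoint, no other $f_{q'}$ can meet $\ran x_k$. Consequently each $k \in G_{\alpha_0}$ lies in the interior of at most one $I_q$.

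Next, a routine inductive check on the level $W_m$ shows that $W$ is closed under negation, whence $|f_q(y)| \leqslant \|y\|$ for every $y \in c_{00}(\nn)$; in particular $|f_q(x_k)| \leqslant 1$. We then estimate
$$
\ell \cdot \Bigl|\alpha_0\Bigl(\sum_k c_k x_k\Bigr)\Bigr| \leqslant \sum_{q=1}^d \sum_{k \in I_q} c_k \leqslant \sum_{q=1}^d \Bigl(2\max_{k \in G_{\alpha_0}} c_k + \sum_{k \in I_q^{\mathrm{int}}} c_k \Bigr),
$$
since each $I_q$ contributes at most two boundary terms, each bounded by $\max_{k\in G_{\alpha_0}} c_k$. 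The double sum of interior terms is bounded by $\sum_{k \in G_{\alpha_0}} c_k$ by the previous paragraph. Dividing through by $\ell$ and using $d \leqslant \ell$ delivers the claimed estimate. The main obstacle I anticipate is the interior/boundary dichotomy of the middle paragraph --- carefully certifying that strict interiority forces $\ran x_k \subseteq \ran f_q$; once that is pinned down, the rest is direct bookkeeping.
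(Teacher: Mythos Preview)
The paper does not give its own proof of this lemma; it is quoted verbatim from \cite{ABM-Illinois} (there Lemma~3.3), so there is nothing to compare against.  Your argument is correct and is exactly the standard proof: split each $I_q$ into its (at most two) endpoints and its interior, observe that the interiors are pairwise disjoint across $q$ because $k\in I_q^{\mathrm{int}}$ forces $\ran x_k\subseteq\ran f_q$, and then sum.  One small point: the chain of inequalities you wrote yields $\leqslant$, not the stated strict $<$.  To recover strictness, note that $\max_{i\in G_{\alpha_0}}c_i>0$ (since $\sum c_i=1$), and either some endpoint $k_q^\pm$ carries positive weight---in which case replacing $\sum_{q}\sum_{I_q^{\mathrm{int}}}c_k$ by $\sum_{G_{\alpha_0}}c_i$ is strict---or all endpoints carry zero weight, in which case the bound $c_{k_q^-}+c_{k_q^+}\leqslant 2\max c_i$ is strict for every nonempty $I_q$.
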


\begin{lemma} Let $k \inn$, $x = \sum_{i=1}^mc_ix_i$
be a $(\xi_{k+1},\xi_k,\e)$ s.c.c. with $\|x_i\|\leqslant 1$ for $i=1,\ldots,m$.
Let also $(\al_q)_{q=1}^d$ be a very fast growing and
$\mathcal{S}_{\xi_k}$-admissible sequence of $\al$-averages. Then
the following holds.
\begin{equation*}
\sum_{q=1}^d\bigg|\al_q\bigg(\sum_{i=1}^mc_ix_i\bigg)\bigg| < \frac{1}{s(\al_1)} + 6\e
\end{equation*}
\label{average on scc}
\end{lemma}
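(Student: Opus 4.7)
The plan is to apply Lemma \ref{average on blocks} to each $\alpha_q$, sum over $q$, and estimate the two resulting pieces: a ``mass'' piece controlled by the disjointness of the ranges of the $x_i$'s plus the very fast growing property, and a ``peak'' piece controlled by the s.c.c.\ property combined with the $\mathcal{S}_{\xi_k}$-admissibility of $(\alpha_q)$ and the spreading/hereditary properties of the Schreier families. Setting $G_q = \{i : \ran\alpha_q\cap \ran x_i\neq\varnothing\}$, Lemma \ref{average on blocks} gives
$$\sum_{q=1}^d|\alpha_q(x)| < \sum_{q=1}^d\frac{1}{s(\alpha_q)}\sum_{i\in G_q}c_i + 2\sum_{q=1}^d\max_{i\in G_q}c_i.$$

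For the first sum, I would partition $G_q = G_q^\circ \sqcup G_q^\partial$ into interior indices $G_q^\circ = \{i : \ran x_i\subset\ran\alpha_q\}$ and at most two boundary indices (the bound $|G_q^\partial|\leq 2$ follows from the disjointness of the successive ranges $(\ran\alpha_q)$ and $(\ran x_i)$). Since the $G_q^\circ$'s are pairwise disjoint in $i$, and $s(\alpha_q)\geq s(\alpha_1)\geq 2$ by very fast growing, the interior contribution is at most $\frac{1}{s(\alpha_1)}\sum_i c_i = \frac{1}{s(\alpha_1)}$, and the boundary contribution is bounded by $\sum_q\max_{i\in G_q}c_i$ (using $|G_q^\partial|\leq 2$ and $s(\alpha_q)\geq 2$). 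Everything reduces to proving $\sum_q\max_{i\in G_q}c_i < 2\varepsilon$.

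To obtain this, pick $i_q^*\in G_q$ attaining the maximum, and distinguish whether $i_q^*$ is interior or boundary. In the interior case, distinct $q$'s give distinct $i_q^*$'s (by disjointness of the $G_q^\circ$'s) and $\phi_{i_q^*}:=\min\supp x_{i_q^*}\in [\min\supp\alpha_q, \max\supp\alpha_q]$, so $(\phi_{i_q^*})$ is a spread of a subset of $(\min\supp\alpha_q)_{q=1}^d\in\mathcal{S}_{\xi_k}$ and thus lies in $\mathcal{S}_{\xi_k}$ by the hereditary/spreading properties; condition (iii) of the s.c.c.\ definition then yields $\sum c_{i_q^*}<\varepsilon$. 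In the boundary case, a single wide-range $x_i$ can serve as $i_q^*$ for several consecutive $q$'s; I would handle this by noting that the $\min\supp\alpha_q$ values for which $x_i$ is a peak all lie in $\ran x_i$, and combining across distinct boundary $i$'s by disjointness of the $\ran x_i$'s produces another $\mathcal{S}_{\xi_k}$-set (via hereditary), to which a second s.c.c.\ application contributes the remaining $\varepsilon$. The main obstacle is precisely this repetition bookkeeping, where a single $x_i$ can be the peak for many $\alpha_q$'s and inflates $c_i$ in $\sum_q\max c_i$; the resolution hinges on the hereditary/spreading structure of $\mathcal{S}_{\xi_k}$ allowing the relevant $\min\supp\alpha_q$ positions to still qualify as admissible $\mathcal{S}_{\xi_k}$-sets for the s.c.c.\ condition.
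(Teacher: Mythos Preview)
Your proposal has a genuine gap: the claimed bound $\sum_{q=1}^d\max_{i\in G_q}c_i < 2\varepsilon$ is false in general, and no amount of bookkeeping with $\mathcal{S}_{\xi_k}$-sets can repair it. Take the scenario where $\ran x_1$ contains $\ran\alpha_q$ for every $q=1,\ldots,d$ while $x_2,\ldots,x_m$ lie entirely to the right of $\alpha_d$. Then $G_q=\{1\}$ for every $q$, the index $1$ is the (boundary) peak for every $q$, and $\sum_q\max_{i\in G_q}c_i = d\,c_1$. Since $d$ is unbounded while the s.c.c.\ only gives $c_1<\varepsilon$, this sum is not controlled. Your proposed handling of the boundary case produces an $\mathcal{S}_{\xi_k}$-set of $\min\supp\alpha_q$'s, but applying the s.c.c.\ condition to such a set bounds a sum of \emph{distinct} $c_k$'s, never a single $c_1$ repeated $d$ times. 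The loss is intrinsic to summing Lemma~\ref{average on blocks} termwise over $q$: that lemma's ``peak'' term is a crude overestimate which, summed, forgets that $(\alpha_q)_q$ act jointly as a single norming functional.

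The argument in \cite{ABM-Illinois} (where the paper takes this lemma from) separates the indices \emph{before} invoking Lemma~\ref{average on blocks}. Put $C=\{i:\ran x_i\text{ meets at least two }\alpha_q\}$ and $B$ the indices meeting exactly one. For $i\in C$ one uses that $\sum_q\pm\alpha_q$ is itself a Schreier functional in $W$ (the sequence is $\mathcal{S}_{\xi_k}$-admissible, hence $\mathcal{S}_{\omega^\xi}$-admissible, and very fast growing), so $\sum_q|\alpha_q(x_i)|\leq\|x_i\|\leq 1$; thus the total $C$-contribution is at most $\sum_{i\in C}c_i$. This is $<2\varepsilon$: for $i\in C$ with $\min Q_i\geq 2$ one has $\phi_i>\max\supp\alpha_{\min Q_i-1}\geq\min\supp\alpha_{\min Q_i-1}$, and since the $\min Q_i$'s are strictly increasing along $C$, the set $\{\phi_i:i\in C,\ \min Q_i\geq 2\}$ is a spread of a subset of $(\min\supp\alpha_q)_q\in\mathcal{S}_{\xi_k}$; the single possible $i$ with $\min Q_i=1$ costs an extra $\varepsilon$. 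Only on the $B$-part does one apply Lemma~\ref{average on blocks}; since each $i\in B$ lies in exactly one $G_q$, the peaks $i_q^*$ are now distinct and your interior/spread argument goes through cleanly to give $\frac{1}{s(\alpha_1)}+4\varepsilon$. Combining yields $\frac{1}{s(\alpha_1)}+6\varepsilon$.
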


The proof of the following follows Lemma \ref{average on scc} and the fact that the sequence $(\xi_k)_k$ used to define $\mathcal{S}_{\omega^\xi}$ are such that $\mathcal{S}_{\xi_k}\subset\mathcal{S}_{\xi_{k+1}}$ for all $k\inn$. 

\begin{corollary}\label{i gotta sleep}
Let $(x_k)_k$ be a bounded block sequence in $\Xox$ and $(y_k)_k$ be further block sequence of $(x_k)_k$, such that each $y_k = \sum_{i\in F_k}c_ix_i$ is a $(\xi_{k+1},\xi_k,\e_k)$ s.c.c. with $\lim_k\e_k = 0$. Then $\al_{<\omega^\xi}((y_j)_j) = 0$.
\end{corollary}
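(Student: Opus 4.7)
The plan is to verify the criterion of Proposition \ref{higherorderindex}(ii) directly for the sequence $(y_j)_j$, using Lemma \ref{average on scc} as the main workhorse. Fix $\varepsilon>0$ and $n\in\nn$. Since $(x_i)_i$ is bounded, choose $C\geqslant 1$ so that $\|x_i\|\leqslant C$ for all $i$; then each $y_k/C = \sum_{i\in F_k}c_i(x_i/C)$ is still a $(\xi_{k+1},\xi_k,\e_k)$ s.c.c., now in normalized blocks.

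The key structural observation, noted just before the statement, is that the defining sequence $(\xi_k)_k$ for $\mathcal{S}_{\omega^\xi}$ is nested: $\mathcal{S}_{\xi_k}\subset\mathcal{S}_{\xi_{k+1}}$ for every $k$. Consequently, for every $k\geqslant n$, any $\mathcal{S}_{\xi_n}$-admissible sequence is automatically $\mathcal{S}_{\xi_k}$-admissible. This is exactly the hypothesis needed to apply Lemma \ref{average on scc} to $y_k/C$ against an $\mathcal{S}_{\xi_n}$-admissible very fast growing sequence $(\alpha_q)_{q=1}^d$.

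Carrying out the application, for $k\geqslant n$ we obtain
\begin{equation*}
\sum_{q=1}^d \bigl|\alpha_q(y_k)\bigr| \;=\; C\sum_{q=1}^d \bigl|\alpha_q(y_k/C)\bigr| \;<\; \frac{C}{s(\alpha_1)} + 6C\e_k.
\end{equation*}
Now choose $k_0\geqslant n$ large enough that $6C\e_k<\varepsilon/2$ for all $k\geqslant k_0$ (possible since $\e_k\to 0$), and choose $j_0\in\nn$ with $C/j_0<\varepsilon/2$. Then for any $k\geqslant k_0$ and any $\mathcal{S}_{\xi_n}$-admissible very fast growing $(\alpha_q)_{q=1}^d$ with $s(\alpha_q)>j_0$ for every $q$ (in particular $s(\alpha_1)>j_0$), the displayed estimate gives $\sum_{q=1}^d|\alpha_q(y_k)|<\varepsilon$. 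By Proposition \ref{higherorderindex}, this is precisely the statement that $\al_{<\omega^\xi}((y_j)_j)=0$.

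There is no substantial obstacle: all the combinatorial and arithmetic work has been absorbed into Lemma \ref{average on scc}, and the only new ingredient is the nesting $\mathcal{S}_{\xi_n}\subset\mathcal{S}_{\xi_k}$ for $k\geqslant n$, which lets the $(\xi_{k+1},\xi_k,\e_k)$ s.c.c.\ structure of $y_k$ absorb an a priori weaker $\mathcal{S}_{\xi_n}$-admissibility condition. Scaling by $C$ is harmless because the definition of a basic s.c.c.\ depends only on the coefficients, not on the norms of the blocks.
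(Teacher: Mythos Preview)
Your proof is correct and is precisely the argument the paper has in mind: the paper's proof is a single sentence pointing to Lemma \ref{average on scc} together with the nesting $\mathcal{S}_{\xi_k}\subset\mathcal{S}_{\xi_{k+1}}$, and you have filled in exactly those details, including the harmless scaling by $C$ and the verification via Proposition \ref{higherorderindex}(ii).
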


 The following easily implies Proposition \ref{properties of the space}.

 \begin{proposition}\label{everything works the way it is supposed to}
 Let $(x_k)_{k \inn}$ be a normalized block sequence in $\Xox$.
 \begin{itemize}
 
 \item[(i)] If $(x_k)_{k \inn}$ generates a spreading model equivalent to $c_0$,  $(F_k)_{k \inn}$ is a sequence of successive subsets of natural numbers such
 that $F_k \in \mathcal{S}_1$ for $k \inn$ and $\lim_{k \to \infty} \# F_k = \infty$   and $y_k = \sum_{i \in F_k} x_i$, then a subsequence $(y_{k_n})_n$ of $(y_k)_k$ generates an $\ell_1^{\omega^\xi}$ spreading model.
 
 \item[(ii)] Suppose $(x_k)_{k \inn}$ generates an $\ell_1^{\omega^\xi}$ spreading model, then there exists  a sequence $(F_k)_{k \inn}$ of successive subsets of natural numbers such  that $F_k \in \mathcal{S}_{\omega^\xi}$ for $k \inn$ and normalized vectors $y_k\in\sspan\{x_i: i\in F_k\}$ such that $(y_k)_k$ generates a $c_0$ spreading model. 
 
 \end{itemize}
 \end{proposition}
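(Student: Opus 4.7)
The plan is to handle the two parts separately, each reducing to one of the two cases in Proposition~\ref{cause index says so} after constructing appropriate witnesses. For part (i), I will show $\alpha_{<\omega^\xi}((y_k)_k)>0$ and then invoke item (i) of that proposition. Fix $\varepsilon\in(0,1)$. Since each $\|x_i\|=1$, I can choose $f_i\in W$ with $f_i(x_i)>1-\varepsilon$; using the (standard, induction-on-the-construction) closure of $W$ under restriction to the range of $x_i$, which preserves $\alpha$-average size, very-fast-growing, and $\mathcal{S}_{\omega^\xi}$-admissibility by the spreading and hereditary properties of $\mathcal{S}_{\omega^\xi}$, I may further assume $\supp f_i\subseteq \supp x_i$. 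For each $k$ with $|F_k|\geq 2$, set
$$\alpha_k=\frac{1}{|F_k|}\sum_{i\in F_k}f_i,$$
which is an $\alpha$-average in $W$ of size $s(\alpha_k)=|F_k|$. Because the $x_j$ are successive blocks and $\supp f_i\subseteq \supp x_i$, one has $f_i(x_j)=0$ for $j\neq i$, so $\alpha_k(y_k)>1-\varepsilon$. Since $|F_k|\to\infty$ and the $F_k$ are successive, I can extract a subsequence $(k_n)$ so that $(\alpha_{k_n})$ is very fast growing. Plugging these into Definition~\ref{omegaindex} with the singleton choices $\{n\}$ (trivially $\mathcal{S}_{\xi_n}$-admissible) shows $\alpha_{<\omega^\xi}((y_{k_n})_n)>0$, and Proposition~\ref{cause index says so}(i) produces the desired $\ell_1^{\omega^\xi}$ spreading model on a further subsequence.

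For part (ii), my plan is to apply Lemma~\ref{yes we can} repeatedly to build special convex combinations, then use Corollary~\ref{i gotta sleep} and Proposition~\ref{cause index says so}(ii) to get a $c_0$ spreading model. Choose a null sequence $\varepsilon_k\downarrow 0$ and construct recursively a block sequence $(y_k')$ with $y_k'=\sum_{i\in F_k}c_i^{(k)}x_i$ a $(\xi_{k+1},\xi_k,\varepsilon_k)$ s.c.c., the supports $(F_k)$ successive members of $\mathcal{S}_{\omega^\xi}$. Corollary~\ref{i gotta sleep} then gives $\alpha_{<\omega^\xi}((y_k')_k)=0$, and Proposition~\ref{cause index says so}(ii) provides a subsequence $(y_{k_n}')$ which is an isometric $c_0$ spreading model. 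Because $(x_i)$ is an $\ell_1^{\omega^\xi}$ spreading model with some uniform lower constant $c>0$, and each $y_k'$ is a convex combination of normalized $x_i$'s supported on an $\mathcal{S}_{\omega^\xi}$-set, I get the two-sided bound $c\leq \|y_k'\|\leq 1$. Setting $y_n=y_{k_n}'/\|y_{k_n}'\|$ yields a normalized block sequence supported on $\mathcal{S}_{\omega^\xi}$-sets which still generates a $c_0$ spreading model, since dividing by factors pinched between two positive constants only rescales the spreading-model constants.

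The main obstacle I foresee is the support-restriction argument used in part (i): verifying carefully that the approximately norming functional $f_i\in W$ can be taken with $\supp f_i\subseteq \supp x_i$ without leaving $W$. This is handled by a routine induction on the construction level $m$ of $W$: for singletons $\pm e_n$ it is trivial, for $\alpha$-averages $\frac{1}{\ell}\sum f_q$ the cardinality constraint $d\leq\ell$ and size $\ell$ are preserved when restricting each $f_q$, and for Schreier functionals $\sum_q\alpha_q$ the very-fast-growing condition (sizes are unchanged by restriction) and $\mathcal{S}_{\omega^\xi}$-admissibility (preserved because the sequence of minima of supports can only increase, remaining admissible by spreading and hereditary) both survive. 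Part (ii) is essentially an assembly of already-established tools; the only mildly delicate bookkeeping is confirming that the normalization step does not destroy the $c_0$ spreading-model property, which it does not for the boundedness reason above.
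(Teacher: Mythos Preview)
Your argument for part (ii) is essentially the paper's own: build $(\xi_{k+1},\xi_k,\varepsilon_k)$ s.c.c.'s via Lemma~\ref{yes we can}, invoke Corollary~\ref{i gotta sleep} to get $\alpha_{<\omega^\xi}=0$, then apply Proposition~\ref{cause index says so}(ii) after normalizing. Your justification that $c\leqslant\|y_k'\|\leqslant 1$ (via the $\ell_1^{\omega^\xi}$ lower estimate on the $\mathcal{S}_{\omega^\xi}$-set $F_k$ and the triangle inequality) is exactly the seminormalization the paper asserts without comment, so this part is fine.

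Part (i), however, has a genuine gap: you never use the hypothesis that $(x_k)_k$ generates a $c_0$ spreading model, and the argument as written does not go through without it. Your construction of the $\alpha$-averages $\alpha_k$ and the verification that $\alpha_{<\omega^\xi}((y_{k_n})_n)>0$ are correct, but Proposition~\ref{cause index says so}(i) is stated for \emph{normalized} block sequences, and $(y_{k_n})_n$ is not normalized. If you normalize, the relevant quantity becomes $\alpha_{k_n}(y_{k_n})/\|y_{k_n}\|$, which is bounded below only if $\sup_n\|y_{k_n}\|<\infty$; equivalently, inspecting the proof of Proposition~\ref{cause index says so}(i), the Schreier-functional argument gives the lower $\ell_1$ estimate, but the upper $\ell_1$ estimate requires the sequence to be bounded. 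This is precisely what the $c_0$ spreading-model assumption supplies: since $F_k\in\mathcal{S}_1$ and $\min F_k\to\infty$, the $c_0$ behaviour forces (after passing to a subsequence of $(y_k)_k$) a uniform bound $\|y_{k_n}\|\leqslant C$. Once you insert this observation, your proof of (i) is complete and agrees with the approach the paper cites from \cite{ABM-Illinois}. Without it, the argument would ``prove'' the statement for \emph{any} normalized block sequence $(x_k)_k$, which is false (take $(x_k)_k$ itself an $\ell_1^{\omega^\xi}$ spreading model: then $\|y_k\|\asymp|F_k|\to\infty$ and no subsequence of $(y_k)_k$ is an $\ell_1^{\omega^\xi}$ spreading model in the paper's seminormalized sense).
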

 
  \begin{proof}
The proof of (i) is identical to that of Proposition 3.14 in \cite{ABM-Illinois}. 

We prove only (ii). By Lemma \ref{yes we can} we can find  $\{F_k\}$a sequence of successive subsets of $\mathbb{N}$  with $F_k \in \mathcal{S}_{\omega^\xi}$ and seminormalized vectors $y_k^\prime\in\sspan\{x_i: i\in F_k\}$ such that $y^\prime_k$ is an $(\xi_{k+1},\xi_k,\e_k)$ s.c.c. with $\lim_{k \to \infty} \e_k=0$. Then 
$(y_k^\prime)_k$ satisfies the assumptions of Corollary \ref{i gotta sleep}. By normalizing the sequence $(y_k^\prime)_k$ and applying the second statement of Proposition \ref{cause index says so}, we obtain the desired sequence $(y_k)_k$. 
\end{proof}

\end{document}